  \newcommand{\gr}{
   \raisebox{-3pt}{
    \makebox[-2pt] {\includegraphics[height=12pt]{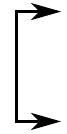}}
    }
  }
  \newcommand{\redgr}{
   \raisebox{-3pt}{
    \makebox[-2pt] {\includegraphics[height=12pt]{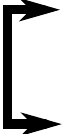}}
    }  
  }
  \newcommand{\ascendinggr}{
  \raisebox{6pt}  { \ensuremath {\begin{array}{lll}
          \\
           \\
          \gr \\
          &\gr \\
          &&\gr \\
        \end{array}}}
  }
    \newcommand{\ascendinggrhats}{
    \raisebox{6pt}  { \ensuremath {\begin{array}{lll}
            \\
             \\
            \hat\gr \\
            &\hat\gr \\
            &&\hat\gr \\
          \end{array}}}
    }
    \newcommand{\descendinggrhats}{
    \raisebox{6pt}  { \ensuremath {\begin{array}{llll}
            \\
             \\
            &&\hat\gr \\
            &\hat\gr \\
            \hat\gr \\
          \end{array}}}
    }
      \newcommand{\descendinggrfull}{
        \raisebox{6pt}  { \ensuremath {\begin{array}{llll}
                \\
                &&&\gr \\
                &&\gr \\
                &\gr \\
                \gr \\
              \end{array}}}
        }
      \newcommand{\descendinggr}{
        \raisebox{6pt}  { \ensuremath {\begin{array}{lll}
                \\
                \\
                &&\gr \\
                &\gr \\
                \gr \\
              \end{array}}}
        }
\newtheorem{cor}{Corollary}
\newtheorem{defi}{Definition}
\newtheorem{rem}{Remark}
\DeclareMathOperator{\tril}{tril}
\DeclareMathOperator{\triu}{triu}
\newcommand{\conj}[1]{\overline{#1}}
\newcommand{\qsh}[1]{\mathcal{QSH}_{#1}}
\newcommand{\gv}[1]{\mathrm{GV}(#1)}
\renewcommand{\hat}{\widehat}
\renewcommand{\tilde}{\widetilde}
\renewcommand{\leq}{\leqslant}
\renewcommand{\geq}{\geqslant}
\title{Quasiseparable Hessenberg reduction of real diagonal plus low rank matrices  and applications\thanks{Work supported by Gruppo Nazionale di Calcolo Scientifico (GNCS) of INdAM}}
\author{Dario A. Bini
\thanks{Dipartimento di Matematica, Universit\`a di Pisa,
Largo Bruno Pontecorvo 5, 56127 Pisa ({\tt bini@dm.unipi.it})}
\and
Leonardo Robol
\thanks{Scuola Normale Superiore, Piazza dei Cavalieri, Pisa
({\tt leonardo.robol@sns.it})}
}
\begin{document}

  \maketitle

  \begin{abstract}
    We present a novel algorithm to perform the Hessenberg reduction
    of an $n\times n$ matrix $A$ of the form $A = D + UV^*$ where $D$
    is diagonal with real entries and $U$ and $V$ are $n\times k$
    matrices with $k\le n$. The algorithm has a cost of $O(n^2k)$
    arithmetic operations and is based on the quasiseparable matrix
    technology. Applications are shown to solving polynomial
    eigenvalue problems and some numerical experiments are reported
    in order to analyze the stability of the approach. 
  \end{abstract}
  
  \smallskip
  
  \noindent {\sl AMS classification:}  65F15
  
  \smallskip
  
     \noindent {\sl Keywords:} Quasiseparable matrices, Polynomial eigenvalue problems, Hessenberg form.

  \section{Introduction}
Reducing an $n\times n$ matrix to upper Hessenberg form by a unitary
similarity is a fundamental step at the basis of most numerical
methods for computing matrix eigenvalues. For a general matrix, this
reduction has a cost of $O(n^3)$ arithmetic operations (ops), while
for matrices having additional structures this cost can be
lowered. This happens, for instance, for the class of quasiseparable
matrices. We say that a matrix is $(k1,k2)$-quasiseparable if its
submatrices contained in the strictly upper triangular part have rank at most
$k_2$ and those contained in the strictly lower triangular part have rank at
most $k_1$.  For simplicity, if $k_1=k_2=k$ we say that $A$ is
$k$-quasiseparable.  Quasiseparable matrices have recently received
much attention; for properties of this matrix class we refer the
reader to the recent books \cite{eidelman:book1}, \cite{eidelman:book2}, 
\cite{vanbarel:book2}, \cite{vanbarel:book1}.

In the papers \cite{eidelman-luca-hess} and \cite{vanbarel-hess},
algorithms are provided to reduce a $k$-quasiseparable matrix $A$ to
upper Hessenberg form $H$ via a sequence of unitary
transformations. If $A$ satisfies some additional hypothesis then the Hessenberg matrix obtained this way is still
quasiseparable, and the cost of this reduction is $O(n^2k^\alpha)$
ops, where $\alpha$ is strictly greater than 1,  in particular
$\alpha = 3$ for the algorithm of \cite{eidelman-luca-hess}. The
advantage of this property is that one can apply the shifted QR
iteration to the matrix $H$ at the cost of $O(nk^2)$ ops per step,
instead of $O(n^2)$, by exploiting both the Hessenberg and the
quasiseparable structure of $H$, see for instance \cite{ego05}, \cite{vvm05}. This way,
the computation of the eigenvalues of $A$ has a lower cost with
respect to the case of a general matrix. More specifically, assuming
that the number of QR iterations is $O(n)$, the overall cost for
computing all the eigenvalues turns to
$O(n^2k^2+n^2k^\alpha)$. Clearly, if $\alpha\ge 3$, the advantage
of this quasiseparable reduction to Hessenberg form can be appreciated
only if $k\ll n$. In particular, if the value of $k$ is of the order
of $n^{\frac13}$ these algorithms have cost $O(n^3)$ as for a general
unstructured matrix.

In this paper we consider the case of a $k$-quasiseparable matrix
which can be represented as
    \begin{equation}\label{eq:d+lr}
      A = D + UV^*, \quad U, V \in \mathbb{C}^{n \times k},
    \end{equation}
where $D$ is a diagonal matrix with real entries and $U,V\in\mathbb
C^{n\times k}$ and $V^*$ is the Hermitian transpose of $V$. Matrices
of this kind are encountered in the linearization of matrix
polynomials obtained by the generalization of the Smith companion form
\cite{smith}, \cite{secular-linearization}.  They have been also used
in the package MPSolve v. 3.1.4 for the solution of polynomial and
secular equations up to any desired precision \cite{br:jcam}.

We prove that if $H=QAQ^*$ is the Hessenberg form of $A$, with $Q$
unitary, then $H$ is $(1,2k-1)$-quasiseparable, moreover, we provide
an algorithm for computing $H$ with $O(n^2k)$ ops. This algorithm
substantially improves the algorithms of \cite{eidelman-luca-hess} and
\cite{vanbarel-hess} whatever is the value of $k$. Moreover, for an
unstructured matrix where $k= n$, the cost of our
algorithm amounts to $O(n^3)$, that is the same asymptotic cost of the
Hessenberg reduction for a general matrix.

An immediate consequence of this algorithm is that the cost for
computing all the eigenvalues of $A$ by means of the shifted QR
iteration applied to $H$ turns to $O(n^2k)$ with an acceleration by
a factor of $O(k^{\alpha-1})$ with respect to the algorithms of
\cite{eidelman-luca-hess} and \cite{vanbarel-hess}.

Another application of this result concerns the solution of polynomial
eigenvalue problems and is the main motivation of this work.  Consider
the matrix polynomial $P(x) = \sum_{i = 0}^n P_i x^i$, $P_i \in
\mathbb{C}^{m \times m}$, where for simplicity we assume $P_n = I$.
The polynomial eigenvalue problem consists in computing the solution
of the equation $\det P(x)=0$ and, if needed, the nonzero vectors $v$
such that $P(x)v=0$.

 The usual strategy adopted in this case is to employ a linearization
 $L(x) = xI- A$ of the matrix polynomial, such that the linear
 eigenvalue problem $L(x)w = 0$ is equivalent to the original one
 $P(x)v = 0$. The matrix $A$, also called companion matrix of $P(x)$,
 is of size $mn \times mn$. Many companion matrices do have a
 $k$-quasiseparable structure where $k=m$ and in the case of the Smith
 companion \cite{smith}, generalized to matrix polynomials in
 \cite{secular-linearization}, the quasiseparable structure takes the
 desired form $A=D + UV^*$. In this case, the cost of our
 algorithm to reduce $A$ into quasiseparable Hessenberg form turns to
 $O(nm^2)$ whereas the algorithms of \cite{eidelman-luca-hess} and
 \cite{vanbarel-hess} of cost $O(nm^{\alpha+1})$, where $\alpha > 1$,
 would be unpractical.

\begin{comment}
    We present an algorithm that allows to compute the Hessenberg
    form of a matrix $A$ decomposed into a diagonal plus low rank 
    representation:
    \[
      A = D + UV^*, U, V \in \mathbb{C}^{n \times k}
    \]
    and $D$ a real diagonal matrix. This matrix is part of a broader
    class of structured matrices often called \emph{quasiseparable}
    or \emph{quasiseparable} matrices. Such structures have been
    studied in several works, such as \cite{eidelman} or \cite{vanbarel:book1}. 
    
    The algorithm that we propose is cheaper than the standard
    $O(n^3)$ reduction process and can be carried out in $O(n^2k)$
    flops. Other algorithms for this problem have been proposed in the
    literature and can be found for example in \cite{vanbarel-hess}
    and \cite{eidelman-luca-hess}. Most of these results, however,
    have an asymptotic cost in the rank that is not linear.  This
    leads to asymptotic costs of the order of $O(n^2k^\alpha)$ with
    $\alpha > 1$ that are faster only if the rank $k$ is such that $k
    \ll n$.
    
    The main motivation for this work can be found in the
    numerical solution of polynomial eigenvalue problems. 
\end{comment}

    This paper is divided in 6 sections. Besides the introduction, in
    Section \ref{sec:rank-conservation} we introduce some preliminary
    results, concerning quasiseparable matrices, which are needed to
    design the algorithm for the Hessenberg reduction.  
In Section~\ref{sec:rthf} we recall the
general algorithm for reducing a matrix into Hessenberg form by means
of Givens rotations and prove the main result, expressed in Theorem
\ref{thm:rankstructure}, on the conservation of the quasiseparable
structure at all the steps of the algorithm.  
In Section~\ref{sec:qsr} we recall and elaborate the definition of
Givens vector representation of a symmetric $k$-quasiseparable
matrix. Then we rephrase Theorem \ref{thm:rankstructure} in terms of
Givens vector representations.
Section~\ref{sec:reduction-algorithm} deals with algorithmic
    issues: the fast algorithm for the Hessenberg reduction is presented
in detail. In
    Section~\ref{sec:numerical-experiments} we present some numerical
    experiments and show that the CPU time needed in our tests confirms 
the complexity bound $O(n^2k)$.
Finally the last Section \ref{sec:appl} briefly shows an application of these results to numerically computing the eigenvalues of a matrix polynomial.
  
  \section{Preliminary tools} \label{sec:rank-conservation}

Throughout the paper the matrix $A$ has the form \eqref{eq:d+lr} and
$H=QAQ^*$ is in upper Hessenberg form, where $Q$ is unitary, i.e., $Q^*Q=I$.
Here we recall the notations and definitions used in this paper, which
mainly comply with the ones used in \cite{vanbarel:book1}, together with the
main results concerning quasiseparable matrices.
      
       \begin{defi}
      A complex matrix $A$ is \emph{lower-quasiseparable}
      (resp. \emph{upper-quasiseparable}) \emph{of rank $k$} if every
      submatrix contained in the strictly lower (resp. upper)
      triangular part of $A$ has rank at most $k$. If $A$ is $k_l$
      lower quasiseparable and $k_u$ upper quasiseparable we say that
      $A$ is $(k_l,k_u)$-quasiseparable. If $k = k_l = k_u$ we say
      that $A$ is $k$-quasiseparable. Moreover, we denote $\qsh{k}^n$
      the set of $n\times n$ Hermitian $k$-quasiseparable matrices
      with entries in $\mathbb C$. We will sometimes omit the superscript $n$
      and simply write $\qsh{k}$ when the dimension is clear from the context. In general, we say that $A$ is
      quasiseparable if it is $(k_l,k_u)$-quasiseparable for some
      nontrivial $k_l,k_u$.
      \end{defi}
       
 The definition of quasiseparable matrix can be easily
 expressed by using the MATLAB notation. In fact, $A$ is
 lower-quasiseparable of rank $k$ if and only if
         \[
         \rank(A[i+1:n,1:i]) \leq k\qquad \hbox{for }i=1,\ldots,n-1,
         \]
where $A[i_1:i_2,j_1:j_2]$ denotes the submatrix of $A$ formed by the entries $a_{i,j}$ for $i=i_1,\ldots,i_2$, $j=j_1,\ldots,j_2$.

 Given a vector
$v=(v_i)\in\mathbb C^2$, denote by $G=G(v_1,v_2)$ a $2\times 2$ Givens
rotation
      \[
        G = \begin{bmatrix} 
         c &  s \\
         -\conj s & c \\
         \end{bmatrix},\quad c\in\mathbb R,\quad |s|^2+c^2=1,
      \]
such that $Gv=\alpha e_1$, where $|\alpha|=\sqrt{|v_1|^2+|v_2|^2}$.
More generally, denote $G_i=I_{i-1}\oplus G\oplus I_{n-i-1}$ the
$n\times n$ matrix which applies the Givens rotation
$G=\left[\begin{smallmatrix}c_i&s_i\\-\conj s_i&
    c_i\end{smallmatrix}\right]$ to the components $i$ and $i+1$ of a
vector, where  $I_m$ denotes the identity matrix of size $m$ and $A\oplus B$ denotes the block
diagonal matrix
$\left[\begin{smallmatrix}A&0\\0&B\end{smallmatrix}\right]$.
With abuse of terminology, we will call $G_i$ Givens
rotations as well.

The following well-known result \cite{vanbarel:book1} will be used in
our analysis:
    
    \begin{lemma} \label{lem:unitaryQ}
      Let $Q$ be a unitary Hessenberg matrix. Then
      \begin{itemize}
        \item $Q$ is a $(1,1)$-quasiseparable matrix. 
        \item $Q$ can be factorized as a product of $n-1$ Givens rotations
          $Q = G_{1} \dots G_{n-1}$.
      \end{itemize}
    \end{lemma}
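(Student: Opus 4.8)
The plan is to prove the two assertions in turn, exploiting the interplay between the Hessenberg shape and unitarity. For the first bullet, I would argue that every submatrix of $Q$ lying strictly below the diagonal has rank at most $1$. Fix indices and consider $Q[i+1:n,1:i]$. Because $Q$ is upper Hessenberg, the block $Q[1:i,i+1:n]$ is not zero in general, but the key observation is to partition $Q$ conformally as a $2\times 2$ block matrix with the $(2,1)$ block equal to $Q[i+1:n,1:i]$. Since $Q$ is unitary, the orthogonality relations between block columns (and block rows) force relations among the blocks; in particular, the Hessenberg structure makes the $(2,1)$ block of $Q^*Q=I$ collapse so that $Q[i+1:n,1:i]$ is expressible as an outer product of the single subdiagonal entry $q_{i+1,i}$ with suitable vectors built from the first row/column of the trailing principal block. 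Concretely, one shows $Q[i+1:n,1:i] = x\,y^*$ where $x\in\mathbb C^{n-i}$ and $y\in\mathbb C^{i}$, hence rank at most $1$. An equivalent and perhaps cleaner route is to use the CS-type decomposition or simply to recall that a unitary Hessenberg matrix is, up to diagonal scaling, the companion-type matrix of its Schur parameters, whose strictly lower triangular part is visibly rank-structured.

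For the second bullet I would proceed constructively by the standard Givens elimination from the bottom. Since $Q$ is Hessenberg, its only nonzero subdiagonal entries are $q_{2,1},\dots,q_{n,n-1}$. Choose $G_{n-1}$ to annihilate $q_{n,n-1}$ acting on rows $n-1,n$; then $G_{n-1}Q$ is still Hessenberg and, crucially, unitary. Repeating, $G_{n-2}G_{n-1}Q$ annihilates the $(n-1,n-2)$ entry, and so on, until $G_1\cdots G_{n-1}Q$ is upper triangular. But a unitary upper triangular matrix is diagonal with unimodular entries; using the Hessenberg structure once more, or by normalizing the $G_i$ appropriately (absorbing phases), this diagonal is the identity, so $G_1\cdots G_{n-1}Q = I$, which rearranges to $Q = G_{n-1}^*\cdots G_1^*$, a product of $n-1$ Givens rotations of the stated form. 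Reindexing (and noting $G_i^*$ is again a Givens rotation acting on components $i,i+1$) yields $Q=G_1\cdots G_{n-1}$ after relabelling. One should check that the claimed ordering $G_1\cdots G_{n-1}$ (rather than the reverse) is the correct one: eliminating from the top instead of the bottom, i.e. using $G_1$ first to clear $q_{2,1}$, gives precisely that order, and this is the version I would present.

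The main obstacle, such as it is, is bookkeeping rather than depth: one must be careful that each partial product remains Hessenberg and unitary so the elimination does not fill in, and that the residual diagonal unitary factor is genuinely trivial (or can be folded into the last rotation) so that exactly $n-1$ rotations suffice. I would also remark that the first bullet in fact follows from the second, since a product $G_1\cdots G_{n-1}$ of Givens rotations has strictly-lower-triangular blocks of rank at most one by a direct inspection of the block structure of such a product; presenting the factorization first and deducing quasiseparability from it is the most economical organization, and it is the one I would adopt.
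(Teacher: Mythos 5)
The paper does not actually prove this lemma: it is quoted as a well-known fact from the reference on quasiseparable matrices, so there is no in-paper argument to match yours against; your proposal has to stand on its own. On that standard it has one real gap. For the first bullet, the only nontrivial claim is the rank bound on the blocks in the \emph{strictly upper} triangular part, $Q[1:i,i+1:n]$: the lower blocks $Q[i+1:n,1:i]$ of any upper Hessenberg matrix contain at most the single entry $q_{i+1,i}$ and are trivially of rank at most one, with no unitarity needed. Your first paragraph spends its effort on exactly these lower blocks, and your closing remark that bullet 1 follows from bullet 2 again inspects only the ``strictly-lower-triangular blocks'' of the product $G_1\cdots G_{n-1}$. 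Nowhere do you establish $\rank(Q[1:i,i+1:n])\le 1$, which is where unitarity genuinely enters. The clean way is the block identity coming from $Q^*Q=QQ^*=I$ (or the CS decomposition): for a unitary matrix partitioned with square diagonal blocks, $\rank(Q[1:i,i+1:n])=\rank(Q[i+1:n,1:i])\le 1$; alternatively, inspect the upper blocks of the ascending product of rotations directly. Either route closes the gap, but as written the statement you need is not proved.

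For the second bullet, the elimination scheme (clear $q_{2,1}$ with $G_1$, then $q_{3,2}$ with $G_2$, and so on, ending with a unitary upper triangular, hence diagonal, matrix) is the right argument and gives the ascending order $Q=G_1^*\cdots G_{n-1}^*D$ after inversion. However, your claim that the residual unimodular diagonal $D$ ``is the identity'' or can be folded into the rotations is not justified with the paper's normalization of a Givens rotation (real $c$ on both diagonal positions): the phase of the entry created by each rotation is forced by the data, so in general $D\ne I$ and it cannot be absorbed while keeping $c$ real. The factorization is exact only if one allows general $2\times 2$ unitary blocks (as in the cited book) or states it up to a right unitary diagonal factor; this is a known imprecision in the lemma as stated and is harmless for the quasiseparability conclusion, since multiplying by a diagonal on the right does not change the ranks of any off-diagonal blocks, but your proof should say this rather than assert triviality of $D$.
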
	   

In the following we use the operators $\tril(\cdot,\cdot)$ and
$\triu(\cdot,\cdot)$, coherently with the corresponding MATLAB functions, 
such that $L = \tril(A,k)$,  $U = \triu(A,k)$ where $A=(a_{i,j})$, $L=(\ell_{i,j})$, $U=(u_{i,j})$ and
\[
  \ell_{i,j} = \begin{cases}
    a_{i,j} & \text{if } i \geq j - k \\
    0 & \text{otherwise} \\
  \end{cases} \qquad  
  u_{i,j} = \begin{cases}
      a_{i,j} & \text{if } i \leq j - k \\
      0 & \text{otherwise} \\
    \end{cases}.
\]

\subsection{A useful operator}  
Another useful tool is the  operator $t:\mathbb C^{n\times n}\to\mathbb C^{n\times n}$ defined by
 \begin{equation}\label{eq:tk}
t(A)=\hbox{tril}(A,-1)+\hbox{triu}(A^*,1).
\end{equation}
Observe that if $A$ has rank $k$ then $t(A)$ is a Hermitian
$k$-quasiseparable matrix with zero diagonal entries.  In particular,
for $u,v\in\mathbb C^n$, the matrix $t(uv^*)$ is in $\qsh{1}^n$ and
its entries are independent of $u_1$ and of $v_n$.  More generally,
for $U,V\in\mathbb C^{n\times k}$, the matrix $t(UV^*)$ is in
$\qsh{k}^n$ and its entries are independent of the first row $U[1,:]$
of $U$ and of the last row $V[n,:]$ of $V$.
Observe also that $t(A)$ is independent of the upper triangular part of $A$.

The following properties can be verified by a direct inspection
\begin{equation}\label{eq:propt}
\begin{split}
&t(A+B)=t(A)+t(B),\hbox{ for any }A,B,\in\mathbb C^{n\times n},\\
&t(\alpha A)=\alpha t(A),\hbox{ for any }\alpha\in\mathbb R,\\
&t(DAD^*) = Dt(A)D^*, \hbox { for any } D \hbox{ diagonal matrix},\\
\end{split}
\end{equation}
moreover,
\begin{equation}\label{eq:proptbis}
t\left({\small \begin{bmatrix}A_{1,1}&A_{1,2}\\ A_{2,1}&A_{2,2}\end{bmatrix}}\right)=
\begin{bmatrix}t(A_{1,1})&A_{2,1}^*\\ A_{2,1}&t(A_{2,2})\end{bmatrix},
\end{equation}
where $A_{1,1}$ and $A_{2,2}$ are square matrices. We also have
\begin{equation}\label{eq:propt1}
t(A)=A-\diag(a_{1,1},\ldots,a_{n,n}),\qquad \hbox{for any $A$ such that }A=A^*.
\end{equation}

We analyze some properties of the residual matrix $R=t(SAS^*)-St(A)S^*$, for
$S$ being a unitary upper Hessenberg matrix, which will be used to prove the main
result in the next section.  We start with a couple of technical
lemmas.

\begin{lemma}\label{lem:t1}
Let $Z \in\mathbb C^{k\times k}$ and set $S=Z\oplus I_{n-k}$ where
$n>k$. Then for any $A\in\mathbb C^{n\times n}$  it holds that
\[
t(SAS^*)-St(A)S^*=W\oplus 0_{n-k},
\]
for some $W\in\mathbb C^{k\times k}$ where $0_{n-k}$ is the null matrix of size $n-k$.
Similarly, for $S=I_{n-k}\oplus Z$ it holds that  $t(SAS^*)-St(A)S^*=0_{n-k}\oplus W'$, for some 
$W'\in\mathbb C^{k\times k}$. The same properties hold if $I_{n-k}$ is replaced by a diagonal matrix $D_{n-k}$.
\end{lemma}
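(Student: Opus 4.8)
The plan is to derive everything from the block identity \eqref{eq:proptbis}, so that no manipulation of the individual entries of $t$ is needed. First I would split $\mathbb C^n=\mathbb C^k\oplus\mathbb C^{n-k}$ and partition $A$ conformally with $S=Z\oplus I_{n-k}$, writing
\[
A=\begin{bmatrix}A_{1,1}&A_{1,2}\\ A_{2,1}&A_{2,2}\end{bmatrix},\qquad A_{1,1}\in\mathbb C^{k\times k},\quad A_{2,2}\in\mathbb C^{(n-k)\times(n-k)}.
\]
Then $SAS^*=\left[\begin{smallmatrix}ZA_{1,1}Z^*&ZA_{1,2}\\ A_{2,1}Z^*&A_{2,2}\end{smallmatrix}\right]$, and applying \eqref{eq:proptbis} to this matrix and to $A$ gives
\[
t(SAS^*)=\begin{bmatrix}t(ZA_{1,1}Z^*)&ZA_{2,1}^*\\ A_{2,1}Z^*&t(A_{2,2})\end{bmatrix},\qquad t(A)=\begin{bmatrix}t(A_{1,1})&A_{2,1}^*\\ A_{2,1}&t(A_{2,2})\end{bmatrix}.
\]
Conjugating the second identity by $S$ yields $St(A)S^*=\left[\begin{smallmatrix}Zt(A_{1,1})Z^*&ZA_{2,1}^*\\ A_{2,1}Z^*&t(A_{2,2})\end{smallmatrix}\right]$, so that on subtracting the two block matrices the off-diagonal blocks and the $(2,2)$ block cancel identically, leaving
\[
t(SAS^*)-St(A)S^*=\bigl(t(ZA_{1,1}Z^*)-Zt(A_{1,1})Z^*\bigr)\oplus 0_{n-k},
\]
and it suffices to take $W=t(ZA_{1,1}Z^*)-Zt(A_{1,1})Z^*$.

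For the second claim, with $S=I_{n-k}\oplus Z$, I would run the same argument with $A$ partitioned the other way, $A_{1,1}\in\mathbb C^{(n-k)\times(n-k)}$ and $A_{2,2}\in\mathbb C^{k\times k}$; the identical cancellations produce $0_{n-k}\oplus W'$ with $W'=t(ZA_{2,2}Z^*)-Zt(A_{2,2})Z^*$. Finally, to handle the case where $I_{n-k}$ is replaced by a diagonal $D_{n-k}$, i.e.\ $S=Z\oplus D_{n-k}$, the only change is that the $(2,2)$ block of $SAS^*$ becomes $D_{n-k}A_{2,2}D_{n-k}^*$ and the off-diagonal blocks pick up a factor $D_{n-k}$; by the third identity of \eqref{eq:propt} one has $t(D_{n-k}A_{2,2}D_{n-k}^*)=D_{n-k}t(A_{2,2})D_{n-k}^*$, so the $(2,2)$ blocks of $t(SAS^*)$ and $St(A)S^*$ still agree, the off-diagonal blocks (each equal to $D_{n-k}A_{2,1}Z^*$, or its conjugate transpose) again cancel, and the conclusion is unchanged.

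There is essentially no obstacle here: the lemma is a bookkeeping consequence of the block formula \eqref{eq:proptbis}. The only two points that deserve a moment's care are taking the partition of $A$ conformal with $S$ in each of the two cases, and checking that the mixed congruence $X\mapsto D_{n-k}XD_{n-k}^*$ commutes with $t$, which is exactly the third line of \eqref{eq:propt}.
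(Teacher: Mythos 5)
Your proposal is correct and follows essentially the same route as the paper's proof: the same conformal block partition of $A$, the same application of \eqref{eq:proptbis} to $t(SAS^*)$ and $St(A)S^*$, yielding $W=t(ZA_{1,1}Z^*)-Zt(A_{1,1})Z^*$, with the diagonal case disposed of via the identity $t(DAD^*)=Dt(A)D^*$ from \eqref{eq:propt}. The only difference is that you spell out the diagonal-replacement bookkeeping slightly more explicitly than the paper does, which is harmless.
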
 
\begin{proof}
Concerning the first part,  partition $A$ as 
$
A=\left[\begin{smallmatrix}A_{1,1}&A_{1,2}\\ A_{2,1}& A_{2,2}
\end{smallmatrix}\right]
$
where $A_{1,1}\in\mathbb C^{k\times k}$, so that
$
SAS^*=\left[\begin{smallmatrix}ZA_{1,1}Z^*&ZA_{1,2}\\ A_{2,1}Z^*& A_{2,2}
\end{smallmatrix}\right].~
$
In view of \eqref{eq:proptbis} we have
\begin{equation}\label{eq:eq1}
t(SAS^*)=\begin{bmatrix}t(ZA_{1,1}Z^*)&ZA_{2,1}^*\\ A_{2,1}Z^*& t(A_{2,2})
\end{bmatrix}.
\end{equation}
On the other hand,
\begin{equation}\label{eq:eq2}
St(A)S^*=S\begin{bmatrix}t(A_{1,1})&A_{2,1}^*\\ A_{2,1}& t(A_{2,2})
\end{bmatrix}S^*=\begin{bmatrix}Zt(A_{1,1})Z^*&ZA_{2,1}^*\\ A_{2,1}Z^*& t(A_{2,2})
\end{bmatrix}.
\end{equation}
So that, from \eqref{eq:eq1} and \eqref{eq:eq2} we get
$t(SAS^*)-St(A)S^*=W\oplus 0_{n-k}$, with
$W=t(ZA_{1,1}Z^*)-Zt(A_{1,1})Z^*$.
The second part can be proved similarly.
Finally, if $I_{n-k}$ is replaced by the diagonal matrix $D_{n-k}$ the same properties hold since for a diagonal matrix $D$ one has $t(DAD^*)-Dt(A)D^*=0$ in view of \eqref{eq:propt}.
\end{proof}

\begin{lemma}\label{lem:t2}
Let $S=(Z\oplus I_{n-2})(1\oplus\hat S)$ where $Z\in\mathbb C^{2\times 2}$, $\hat S\in\mathbb C^{(n-1)\times(n-1)}$.
Then for any matrix $A$ partitioned as $A={\scriptsize \begin{bmatrix}a_{1,1}&u^*\\ v& \hat A\end{bmatrix}}\in\mathbb C^{n\times n}$,
where $\hat A\in\mathbb C^{(n-1)\times(n-1)}$  it holds that
\[
t(SAS^*)-St(A)S^*=W\oplus 0_{n-2}+(Z\oplus I_{n-2})(0\oplus(t(\hat S\hat A\hat S^*)-\hat St(\hat A)\hat S^*)) (Z^*\oplus I_{n-2}).
\]
for some $W\in\mathbb C^{2\times 2}$.
\end{lemma}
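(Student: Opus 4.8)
The plan is to reduce the statement for $S$ to statements about its two factors by exploiting the additivity-like behaviour of the residual operator $R(S,A) := t(SAS^*) - St(A)S^*$ under composition. First I would write $S = S_1 S_2$ with $S_1 = Z \oplus I_{n-2}$ and $S_2 = 1 \oplus \hat S$, and expand
\[
R(S_1 S_2, A) = t(S_1 (S_2 A S_2^*) S_1^*) - S_1 S_2 t(A) S_2^* S_1^*.
\]
Inserting $\pm S_1 t(S_2 A S_2^*) S_1^*$ in the middle gives the cocycle identity
\[
R(S_1 S_2, A) = R(S_1, S_2 A S_2^*) + S_1\bigl(t(S_2 A S_2^*) - S_2 t(A) S_2^*\bigr)S_1^* = R(S_1, S_2 A S_2^*) + S_1 R(S_2, A) S_1^*.
\]
So the residual for the product splits into a piece coming from the $Z\oplus I_{n-2}$ step and a conjugated copy of the piece coming from the $1\oplus \hat S$ step.

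Next I would handle the two pieces separately. For the first piece, $S_1 = Z \oplus I_{n-2}$ is exactly of the form covered by Lemma~\ref{lem:t1} (with $k=2$), so $R(S_1, B) = W \oplus 0_{n-2}$ for some $W \in \mathbb C^{2\times 2}$, whatever the matrix $B = S_2 A S_2^*$ is; this already accounts for the $W\oplus 0_{n-2}$ term in the claimed formula. For the second piece, $S_2 = 1 \oplus \hat S$, and since $t$ respects the $2\times 2$ block partition with a leading $1\times 1$ block via \eqref{eq:proptbis} (applied with $A_{1,1} = a_{1,1}$, a $1\times 1$ block), one computes directly that for $A = \left[\begin{smallmatrix} a_{1,1} & u^* \\ v & \hat A \end{smallmatrix}\right]$ both $t(S_2 A S_2^*)$ and $S_2 t(A) S_2^*$ have the same $(1,1)$ entry (namely $0$) and the same off-diagonal blocks ($\hat S v$ and its conjugate transpose), so that $R(S_2, A) = 0 \oplus \bigl(t(\hat S \hat A \hat S^*) - \hat S t(\hat A)\hat S^*\bigr)$. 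Conjugating this by $S_1 = Z \oplus I_{n-2}$ and noting $0 \oplus (\cdots)$ only touches rows and columns $2,\dots,n$, the leading $2\times 2$ of $S_1$ acts only on index $2$, giving exactly the second term in the asserted identity. Summing the two contributions yields the formula.

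The routine but slightly delicate point is verifying $R(S_2, A) = 0 \oplus (t(\hat S\hat A\hat S^*) - \hat S t(\hat A)\hat S^*)$, i.e. that the first row and column contribute nothing; this is where \eqref{eq:proptbis} with a $1\times 1$ leading block does the work, together with the observation that $t$ ignores the strictly upper triangular part and the diagonal entry $a_{1,1}$ is killed on both sides. The only genuine obstacle is bookkeeping: making sure the conjugation by $Z \oplus I_{n-2}$ in the cocycle identity is correctly tracked so that the final $(Z\oplus I_{n-2})(\cdots)(Z^*\oplus I_{n-2})$ wrapper lands precisely as stated, and that the two $W$'s (the one from Lemma~\ref{lem:t1} and the one in the statement) are identified. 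No fixed-point or limiting arguments are needed; everything is a finite algebraic identity following from \eqref{eq:propt}, \eqref{eq:proptbis}, and Lemma~\ref{lem:t1}.
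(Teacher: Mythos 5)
Your proposal is correct and follows essentially the same route as the paper: the paper likewise splits $S=(Z\oplus I_{n-2})(1\oplus\hat S)$, applies Lemma~\ref{lem:t1} (rearranged) to the outer factor to produce the $W\oplus 0_{n-2}$ term plus $(Z\oplus I_{n-2})\,E\,(Z^*\oplus I_{n-2})$ with $E=t((1\oplus\hat S)A(1\oplus\hat S^*))-(1\oplus\hat S)t(A)(1\oplus\hat S^*)$, and then evaluates $E=0\oplus\bigl(t(\hat S\hat A\hat S^*)-\hat S t(\hat A)\hat S^*\bigr)$ blockwise via \eqref{eq:proptbis}. Your explicit ``cocycle identity'' for the residual operator is just a named reformulation of that same algebraic splitting, so the two arguments coincide step for step.
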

\begin{proof}
Set $B=(1\oplus \hat S)A(1\oplus \hat S^*)$ then by Lemma \ref{lem:t1}
\[
t(SAS^*)=t((Z\oplus I_{n-2})B(Z^*\oplus I_{n-2})=W\oplus 0_{n-2}+(Z\oplus I_{n-2})t(B)(Z^*\oplus I_{n-2}).
\]
On the other hand
\[
St(A)S^*=(Z\oplus I_{n-2}) (1\oplus \hat S) t(A)(1\oplus \hat S^*)(Z^*\oplus I_{n-2}).
\]
Thus
\[
t(SAS^*)-St(A)S^*=W\oplus 0_{n-2}+ (Z\oplus I_{n-2})E(Z^*\oplus I_{n-2}), 
\]
where $E=t(B)-(1\oplus \hat S) t(A)(1\oplus \hat S)$. Now, since $B=(1\oplus \hat S)A(1\oplus \hat S^*)$, in view of \eqref{eq:proptbis} we have
\[
B=\begin{bmatrix} a_{1,1}&u^*\hat S^*\\ \hat Sv&\hat S\hat A\hat S^* \end{bmatrix},\quad
 t(B)=\begin{bmatrix} 0&v^*\hat S^*\\ \hat Sv&t(\hat S\hat A\hat S^*)\end{bmatrix}.
\]
A similar analysis shows that
\[
(1\oplus\hat S)t(A)(1\oplus\hat S^*)=\begin{bmatrix}0&v^*\hat S^*\\ \hat S v&\hat S t(\hat A)\hat S^*  \end{bmatrix}.
\]
Thus we get
\[
t(SAS^*)-St(A)S^*=W\oplus 0_{n-2}+(Z\oplus I_{n-2})(0\oplus(t(\hat S\hat A\hat S^*)-St(\hat A)\hat S^*)) (Z^*\oplus I_{n-2}).
\]
\end{proof}
 
A consequence of the above results is expressed by
the following

\begin{theorem}\label{th:0}
Let $A\in\mathbb C ^{n\times n}$, and set $Q=G_h\cdots
G_{k}$ for $1\le h<k\le n-1$, where the
parameters $s_i,c_i$ defining $G_i$  are such that
$s_i\ne 0$, $i=h,\ldots,k$.
 Then $R_n:=t(QAQ^*)-Qt(A)Q^*=\diag(d)+t(ab^*)\in\qsh{1}^n$ for vectors
$a,b,d\in\mathbb C^n$, where $b$ is independent of $A$. 
More precisely,  $b_h=s_h\cdots s_{k}$, $b_i=c_{i-1}s_{i}\cdots s_{k}$, 
for $i=h+1,\ldots,k$, $b_i=a_i=d_i=0$ for $i<h$ and $i> k+1$.  In particular, if 
$h > 1$ then $R_ne_1=0$.
\end{theorem}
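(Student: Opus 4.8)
The plan is to argue by induction on the number of Givens rotations forming $Q$, peeling off one rotation at a time and invoking Lemmas~\ref{lem:t1} and~\ref{lem:t2}; throughout, $R_n$ is Hermitian since $t(QAQ^*)$ and $Qt(A)Q^*$ are. First I would reduce to the case $h=1$: as $G_h,\dots,G_k$ act only on the components $h,\dots,k+1$, we may write $Q=I_{h-1}\oplus Q'$ with $Q'=G_1'\cdots G_{k-h+1}'$ a product of Givens rotations on consecutive components of $\mathbb C^{n-h+1}$ starting from the first, all with nonzero sines. By the second part of Lemma~\ref{lem:t1}, $R_n=0_{h-1}\oplus R'$, where $R'=t(Q'\hat AQ'^*)-Q't(\hat A)Q'^*$ is precisely the residual associated with $Q'$ and the trailing $(n-h+1)\times(n-h+1)$ principal submatrix $\hat A$ of $A$. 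This already gives $R_ne_1=0$ when $h>1$, and it reduces the claim to $h=1$: once that case is known, the general statement follows by prepending $h-1$ zeros to the vectors $a,b,d$ of the sub-problem, since $t$ of a rank-one matrix bordered by zero rows and columns is the bordering of $t$ of the original, so the zero pattern and the closed form for $b$ transfer under the index shift $i\mapsto i+h-1$.

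For $h=1$ I would induct on the number of rotations. In the base case $k=1$ (a single rotation---slightly outside the theorem's hypothesis $h<k$, but serving as the base of the induction) we have $Q=G_1=G\oplus I_{n-2}$, so Lemma~\ref{lem:t1} gives $R_n=W\oplus 0_{n-2}$ with $W$ Hermitian of size $2$; splitting $W$ into diagonal and off-diagonal parts and using $s_1\ne0$, one writes $R_n=\diag(d)+t(ab^*)$ with $d=(w_{1,1},w_{2,2},0,\dots,0)$, $b=s_1e_1$, and $a=(w_{2,1}/\conj{s_1})e_2$, which is the asserted form. For the inductive step $k\ge2$, write $Q=(G\oplus I_{n-2})(1\oplus\tilde Q)$ where $\tilde Q=\tilde G_1\cdots\tilde G_{k-1}\in\mathbb C^{(n-1)\times(n-1)}$ is the product of the Givens rotations with parameters $c_{i+1},s_{i+1}$. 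Lemma~\ref{lem:t2} gives
\[
R_n=W\oplus 0_{n-2}+(G\oplus I_{n-2})\bigl(0\oplus\hat R_{n-1}\bigr)(G^*\oplus I_{n-2}),
\]
where $\hat R_{n-1}=t(\tilde Q\hat A\tilde Q^*)-\tilde Qt(\hat A)\tilde Q^*$ is the residual of the $(n-1)\times(n-1)$ problem with $\hat A$ the trailing block of $A$; by the inductive hypothesis $\hat R_{n-1}=\diag(\hat d)+t(\hat a\hat b^*)$ with $\hat b$ independent of $\hat A$, $\hat b_1=s_2\cdots s_k$, $\hat b_i=c_i s_{i+1}\cdots s_k$ for $i=2,\dots,k-1$, and $\hat a_i=\hat b_i=\hat d_i=0$ for $i>k$.

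Prepending a zero, $0\oplus\hat R_{n-1}=\diag(\breve d)+t(\breve a\breve b^*)$, where each of $\breve a,\breve b,\breve d$ is the corresponding hatted vector with a leading zero adjoined; the plan is then to push the conjugation by $G\oplus I_{n-2}$ through these two summands. A direct computation gives $(G\oplus I_{n-2})\diag(\breve d)(G^*\oplus I_{n-2})=\hat d_1\,pp^*+\diag(0,0,\hat d_2,\dots,\hat d_{n-1})$ with $p=(s_1,c_1,0,\dots,0)^\top$, whose first summand lives in the leading $2\times2$ block; and Lemma~\ref{lem:t1} (with the $2\times2$ block $G$) gives $(G\oplus I_{n-2})t(\breve a\breve b^*)(G^*\oplus I_{n-2})=t\bigl((G\oplus I_{n-2})\breve a\,((G\oplus I_{n-2})\breve b)^*\bigr)-W'\oplus 0_{n-2}$ for some $2\times2$ matrix $W'$. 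Setting $b:=(G\oplus I_{n-2})\breve b$, one reads off $b_1=s_1\hat b_1$, $b_2=c_1\hat b_1$ and $b_i=\hat b_{i-1}$ for $i\ge3$, which with the formula for $\hat b$ gives exactly $b_1=s_1\cdots s_k$, $b_i=c_{i-1}s_i\cdots s_k$ for $i=2,\dots,k$, and $b_i=0$ for $i>k+1$; moreover $b$ remains independent of $A$. It then remains to absorb the three $2\times2$ ``bulge'' terms ($W$, the $pp^*$ term and $W'$), which together form a Hermitian $2\times2$ matrix: its diagonal goes into $d$ (alongside $\hat d_2,\dots,\hat d_{n-1}$), and its off-diagonal part is moved into $t(ab^*)$ by correcting the single entry $a_2$---this is where $b_1=s_1\cdots s_k\ne0$ is used, to solve for the needed correction---while the remaining support conditions on $a$ and $d$ follow from those on $\hat a,\hat d$.

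I expect the main obstacle to be this last bookkeeping in the inductive step. Since conjugation by $G\oplus I_{n-2}$ does not commute with $t(\cdot)$, it produces $2\times2$ corrections from both the diagonal part $\diag(\breve d)$ and the rank-one part $t(\breve a\breve b^*)$, and one must verify that all of them, together with the $2\times2$ matrix produced by Lemma~\ref{lem:t2}, reassemble into a single $\diag(d)+t(ab^*)$ in which $b$ transforms \emph{exactly} by the linear map $G\oplus I_{n-2}$, so that both the closed form of $b$ and its independence of $A$ propagate through the induction. Tracking the supports, and the harmless freedom in the entry $b_{k+1}$ (which does not affect $t(ab^*)$ because $a_i=0$ for $i>k+1$), is then routine.
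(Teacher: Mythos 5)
Your proposal is correct and follows essentially the same route as the paper: reduce to $h=1$ via Lemma~\ref{lem:t1}, induct on the number of rotations with the factorization $Q=(Z\oplus I_{n-2})(1\oplus\hat S)$ and Lemma~\ref{lem:t2}, propagate $b$ by $b=(G\oplus I_{n-2})\breve b$, and absorb the leading $2\times 2$ bulge into $d$ and $a_2$ using $b_1=s_1\cdots s_k\ne 0$. The only difference is organizational (you handle the conjugated $0\oplus R_{n-1}$ by splitting it into diagonal and $t$-parts and applying Lemma~\ref{lem:t1} again, where the paper writes out the entries explicitly), which does not change the substance of the argument.
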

\begin{proof}
Clearly the matrix $Q$ has the form $I_{h-1}\oplus Z_{k-h+2}\oplus
I_{n-k-1}$ where $Z_{k-h+2}$ is a unitary Hessenberg matrix of size
$k-h+2$. In view of Lemma \ref{lem:t1}, we can write
$R_n=0_{h-1}\oplus R_{k-h+2}\oplus 0_{n-k-1}$  and this immediately
proves the last statement of the Theorem. Moreover, it follows that $a_i=b_i=d_i=0$
for $i=1,\ldots, h-1$ and for $i=k+2,\ldots,n$ so that it is
sufficient to prove the claim for $R_{k-h+2}$. Equivalently, we may
assume that $h=1$ and $k=n-1$ so that $s_i\ne 0$ for $i=1,\ldots,n-1$.
We prove that $R_n\in\qsh{1}^n$ by induction on $n$.  For $n=2$ it
holds that $R_2=\left[\begin{smallmatrix}0 & \overline{\alpha}
    \\\alpha &0 \end{smallmatrix}\right]$. This way one can choose
$a_2=\alpha/s_1$ and $b_1=s_1$.  For the inductive step, let $n>1$ and
observe that $Q$ can be factorized as $Q=(Z\oplus I_{n-2})(1\oplus
\hat S)$ for $Z=\left[\begin{smallmatrix}c &s \\ -\conj s
    &c \end{smallmatrix}\right]$, and $\hat S\in\mathbb
C^{(n-1)\times(n-1)}$, where for notational simplicity we set
$s=s_{1}$, $c=c_{1}$.  Applying Lemma \ref{lem:t2} yields
\[
R_n=W\oplus 0_{n-2}+(Z\oplus I_{n-2})(0\oplus R_{n-1})(Z^*\oplus I_{n-2})
\]
for
\[\begin{split}
W&=t(Z\left[\begin{smallmatrix}a_{1,1}&a_{1,2}\\ a_{2,1}&a_{2,2}\end{smallmatrix}\right]Z^*)
-Zt(\left[\begin{smallmatrix}a_{1,1}&a_{1,2}\\ a_{2,1}&a_{2,2}\end{smallmatrix}\right])Z^* \\
&=\left[ \begin{smallmatrix}
  -c (s a_{2,1} + \conj{s a_{2,1}}) & -s ( \conj c \conj a_{1,1} + s \conj a_{1,2} - \conj c \conj a_{2,2} -s \conj a_{2,1}) \\
  -\conj s ( c a_{1,1} + \conj s a_{1,2} -c a_{2,2} -\conj s a_{2,1}) & c (sa_{2,1} + \conj{(sa_{2,1})}) \\
\end{smallmatrix} \right],
\end{split}
\]
 where $R_{n-1}=t(\hat S\hat A\hat S^*)-\hat St(\hat A)\hat S^*$ and $\hat A$ is the trailing principal submatrix of $A$ of size $n-1$.
A direct inspection shows that
\begin{equation}\label{eq:tmp1}
R_n=W\oplus 0_{n-2}+
\left[\begin{array}{c|c}
|s|^2e_1^TR_{n-1}e_1&se_1^T R_{n-1}D\\ \hline 
\conj s DR_{n-1}e_1&DR_{n-1}D
\end{array}\right],\quad D=c \oplus I_{n-2}.
\end{equation}
  From the inductive assumption we may write that
$R_{n-1}=\diag(\hat d)+t(\hat a\hat b^*)$ for $\hat a,\hat b,\hat
d\in\mathbb C^{n-1}$, where $\hat b_1=s_{2}\cdots s_{n-1}\ne 0$, $\hat
b_i=c_{i} s_{i+1}\cdots s_{n-1}$.
So that
\eqref{eq:tmp1} turns into
{\small \[
R_n=
\left[\begin{array}{ccccc}
w_{1,1}&w_{1,2}\\
w_{2,1}&w_{2,2}\\
&&\\
&&&\\
&&&&
\end{array}\right]+
\left[\begin{array}{c|c|cccc}
|s|^2\hat d_1&*&*&\cdots&\ldots&*\\ \hline 
\hat d_1 c \conj s&c^2\hat d_1&*&\ldots&\ldots& *\\ \hline
\hat a_2\overline{\hat b}_1\conj s&\hat a_2\overline{\hat b}_1c&\hat d_2 &\ddots&\ddots&\vdots\\
\vdots&\vdots&\hat a_3\overline{\hat b}_2&\hat d_3&\ddots&\vdots\\
\vdots&\vdots&\vdots                    &\ddots  &\ddots&*\\
   \hat a_{n-1}\overline{\hat b}_1 \conj s&
        \hat a_{n-1}\overline{\hat b}_1c& \hat a_{n-1}\overline{\hat b}_2&
           \ldots&
               \hat a_{n-1}\overline{\hat b}_{n-2}&
                   \hat d_{n-1}
\end{array}\right],
\] }
where the upper triangular part, denoted with $*$, is determined by
symmetry.  Thus, it follows that $R_n=\diag(d)+t(a,b)$ where
$d_1=|s_{1}|^2\hat d_1+w_{1,1}$, $d_2=c_{1}^2\hat d_1+w_{2,2}$, $d_i=\hat
d_{i-1}$, for $i=3,\ldots,n$; moreover
\[
\begin{split}
&a_2=\frac{1}{\overline{\hat
  b}_1}(c\conj s\hat d_1 + w_{2,1}),\\
 &a_i=\hat a_{i-1}, \hbox{ for }i=3,\ldots,n,\\ 
&b_1=s_{1}\hat b_1,~~ b_2=c_{1}\hat b_1,\\
&b_i=\hat b_{i-1}, \hbox{ for }i=3,\ldots,n-1,
\end{split}
\]
 where the condition $s_i\ne
0$ implies that $b_1\ne 0$. This completes the proof.
\end{proof}

\begin{comment}
If $A=D$, where $D$ is a diagonal matrix with real entries, Theorem \ref{th:0} has the
following {\color{red} Il corollario non viene mai citato}
\begin{cor}\label{cor:1}
In the hypotheses of Theorem \ref{th:0}, if $A=D\in\mathbb R^{n\times n}$ 
is a diagonal matrix we have
$QDQ^*\in\qsh{1}^n$.
\begin{proof}
If $A=D$ is diagonal then $t(D)=0$ so that by Theorem \ref{th:0} we
get $t(QDQ^*)\in\qsh{1}^n$. On the other hand, in view of
\eqref{eq:propt1}, we have
$QDQ^*=t(QDQ^*)+\diag(b_{1,1},\ldots,b_{n,n})$ for $B=QDQ^*$. Thus,
$QDQ^*\in\qsh{1}^n$.
\end{proof}
\end{cor}
\end{comment}

The property that $R=D+t(ab^*)$, where $b$ is independent of $A$
enables us to prove the following

\begin{cor}\label{cor:2}
In the hypotheses of Theorem \ref{th:0} we have
\[
R=t(QAQ^*)-Q(D+t(A))Q^*\in\qsh{1}^n
\]
for any diagonal matrix $D$ with real entries. Moreover, $R=\diag(d)+t(ab^*)$ for some vectors $a,b\in\mathbb C^n$ and $d\in\mathbb R^n$.  
\end{cor}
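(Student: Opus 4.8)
The plan is to derive the statement from Theorem~\ref{th:0} by applying it twice and exploiting the fact that the vector $b$ it produces does not depend on $A$. First I would split off the diagonal term, writing
\[
R = t(QAQ^*) - Q\big(D+t(A)\big)Q^* = \big(t(QAQ^*) - Qt(A)Q^*\big) - QDQ^* = R_n - QDQ^*,
\]
where $R_n$ is exactly the matrix treated in Theorem~\ref{th:0}. That theorem already gives $R_n = \diag(d') + t(a'b^*) \in \qsh{1}^n$ with $b$ independent of $A$; moreover, since $R_n$ and $t(a'b^*)$ are Hermitian (recall $t$ always produces a Hermitian matrix), $d'$ is real.

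Next I would handle $QDQ^*$. The point to watch is that a bare ``$QDQ^*\in\qsh{1}^n$'' would not suffice, because the sum of two quasiseparable matrices of rank $1$ is in general of rank $2$; what rescues the argument is that the two rank-one lower parts will share the \emph{same} generator $b$. Concretely, applying Theorem~\ref{th:0} with $A$ replaced by the diagonal matrix $D$ --- for which $t(D)=0$ --- yields $t(QDQ^*) = \diag(d'') + t(a''b^*)$ with the very same $b$. Since $QDQ^*$ is Hermitian, \eqref{eq:propt1} gives $QDQ^* = t(QDQ^*) + \diag(\delta)$ with $\delta$ its (real) main diagonal, so $QDQ^* = \diag(d''+\delta) + t(a''b^*)$.

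Finally I would subtract, using the additivity and real homogeneity of $t$ from \eqref{eq:propt} (so $t(a'b^*) - t(a''b^*) = t((a'-a'')b^*)$), to obtain
\[
R = \diag(d) + t(ab^*), \qquad d = d' - d'' - \delta, \quad a = a' - a''.
\]
Here $d$ is real because $R = t(QAQ^*) - Q(D+t(A))Q^*$ is a difference of Hermitian matrices, and $R\in\qsh{1}^n$ because $t(ab^*)\in\qsh{1}^n$ while adding the real diagonal $\diag(d)$ changes neither the strictly lower nor the strictly upper submatrices and keeps the matrix Hermitian. The only genuinely delicate step is the one just highlighted --- making sure the two invocations of Theorem~\ref{th:0} produce literally the same $b$; the rest is routine bookkeeping with the elementary properties \eqref{eq:propt}--\eqref{eq:propt1} of the operator $t$.
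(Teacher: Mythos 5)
Your proposal is correct and follows essentially the same route as the paper's own proof: both apply Theorem~\ref{th:0} twice (to $A$ and to the diagonal matrix $D$, where $t(D)=0$), exploit that the generator $b$ is independent of $A$ so the two rank-one terms combine via \eqref{eq:propt} into $t((a'-a'')b^*)$, and use \eqref{eq:propt1} to absorb the diagonal of $QDQ^*$. The only cosmetic difference is that you carry an extra diagonal term $\diag(d'')$ for $t(QDQ^*)$, which the paper drops since $t(\cdot)$ always has zero diagonal.
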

\begin{proof}
Without loss of generality we may assume that $s_i\ne 0$ for
$i=1,\ldots,n-1$.  In view of Theorem \ref{th:0},
$t(QAQ^*)-Qt(A)Q^*=\diag(d)+t(ab^*)$, where $b$ is independent of
$A$. On the other hand equation \eqref{eq:propt1} implies that
$QDQ^*=\diag(d')+t(QDQ^*)$, moreover, since $t(D)=0$, Theorem
\ref{th:0} implies that $t(QDQ^*)=t(a'b^*)$ for some vector
$a'$. Thus we get $QDQ^*=\diag(d')+ t(a'b^*)$. Therefore
$R=\diag(d)+t(ab^*)-\diag(d')-t(a'b^*)=\diag(d+d')+t((a-a')b^*)$,
that is, $R\in\qsh{1}^n$.
\end{proof}

A further analysis enables us to provide an explicit representation of
the $i$th row of the matrix $R$. More precisely we have the following result:
\begin{theorem}\label{th:row}
Under the assumptions of Theorem \ref{th:0}, the $i$th row of the matrix  $R=t(QAQ^*)-Q(D+t(A))Q^*$ has the representation
\begin{equation}\label{eq:gvr}
e_i^TR=[0,\ldots,0,v_i,d_i,w_i^*]G^*_{i-2}G^*_{i-3}\cdots G^*_1
\end{equation}
where, $w_i,d_i\in\mathbb C$, $w_i\in\mathbb C^{n-i}$ and $d_i=r_{i,i}$.
\end{theorem}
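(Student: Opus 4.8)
I would argue as follows. The plan is to read off the $i$th row of $R$ from the explicit representation $R=\diag(d)+t(ab^*)$ provided by Corollary~\ref{cor:2}, taking advantage of the fact that the vector $b$ occurring there is the one of Theorem~\ref{th:0}, whose entries are known in closed form. Arguing as in the proof of Theorem~\ref{th:0} we may assume without loss of generality that $h=1$ and $k=n-1$, so that $s_j\ne 0$ for $j=1,\dots,n-1$: indeed $R=0_{h-1}\oplus\widetilde R\oplus 0_{n-k-1}$ for a $(k-h+2)\times(k-h+2)$ matrix $\widetilde R$ of the same type, the rotations $G_1,\dots,G_{h-1}$ are the identity, the rows $i\le h-1$ and $i\ge k+2$ of $R$ vanish so that \eqref{eq:gvr} holds trivially for them, and for $h\le i\le k+1$ the claim reduces to the case $h=1$, $k=n-1$. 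We also assume $i\ge 2$, the case $i=1$ being immediate.

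Since $t(ab^*)$ coincides with $ab^*$ strictly below the diagonal, $r_{i,j}=a_i\conj b_j$ for $j<i$, so the $i$th row of $R$ splits as
\[
e_i^TR=a_i\,[\,\conj b_1,\dots,\conj b_{i-1},0,\dots,0\,]+[\,0,\dots,0,r_{i,i},r_{i,i+1},\dots,r_{i,n}\,],
\]
the first summand being supported on coordinates $1,\dots,i-1$ and the second on coordinates $i,\dots,n$. Each $G_j$ with $j\le i-2$ acts only on the coordinate pair $(j,j+1)\subseteq\{1,\dots,i-1\}$, so $G_1\cdots G_{i-2}$ fixes the second summand and maps the first into $\spa\{e_1,\dots,e_{i-1}\}$; it therefore suffices to compute the action of $G_1\cdots G_{i-2}$ on the truncated row $[\,\conj b_1,\dots,\conj b_{i-1},0,\dots,0\,]$.

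The core of the proof, and the only genuine computation, is the identity
\[
[\,\conj b_1,\dots,\conj b_{i-1},0,\dots,0\,]\,G_1G_2\cdots G_{i-2}=\gamma\,e_{i-1}^T,\qquad \gamma=\conj s_{i-1}\conj s_i\cdots\conj s_{n-1}.
\]
By Theorem~\ref{th:0}, $\conj b_1=\conj s_1\cdots\conj s_{n-1}$ and $\conj b_j=c_{j-1}\conj s_j\cdots\conj s_{n-1}$ for $j\ge 2$, the $c_j$ being real; hence, using $|s_1|^2+c_1^2=1$, right multiplication by $G_1$ annihilates the first entry, turns $\conj b_2=c_1\conj s_2\cdots\conj s_{n-1}$ into $\conj s_2\cdots\conj s_{n-1}$, and leaves $\conj b_3,\dots,\conj b_{i-1}$ unchanged. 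The resulting vector restricted to coordinates $2,\dots,i-1$ has exactly the same shape as the original one restricted to coordinates $1,\dots,i-1$, shifted by one position, so applying $G_2,G_3,\dots,G_{i-2}$ in succession clears coordinates $2,3,\dots,i-2$ one at a time and leaves $\gamma$ in coordinate $i-1$. (Equivalently one argues by induction on $i$.)

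Combining the two displays, $e_i^TR\,G_1\cdots G_{i-2}=[\,0,\dots,0,a_i\gamma,\,r_{i,i},r_{i,i+1},\dots,r_{i,n}\,]$, and right multiplication by $(G_1\cdots G_{i-2})^{-1}=G_{i-2}^*\cdots G_1^*$ gives \eqref{eq:gvr} with $v_i=a_i\gamma=a_i\conj s_{i-1}\cdots\conj s_{n-1}$, $d_i=r_{i,i}$ and $w_i^*=[\,r_{i,i+1},\dots,r_{i,n}\,]$, so that $w_i\in\mathbb C^{n-i}$. The part needing most care in a full write-up is the coordinate bookkeeping — verifying that $G_1\cdots G_{i-2}$ really preserves the splitting $\{1,\dots,i-1\}\cup\{i,\dots,n\}$ and that the reduction to $h=1$, $k=n-1$ is legitimate for the rows $h\le i\le k+1$; the algebra itself is confined to the single telescoping identity above.
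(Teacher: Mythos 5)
Your argument is correct, but it follows a genuinely different route from the paper's proof. The paper never touches the explicit representation of $R$: it writes $Q=Q_1Q_2$ with $Q_1=G_1\cdots G_{i-2}$ and $Q_2=G_{i-1}\cdots G_{n-1}$, observes that \eqref{eq:gvr} is equivalent to the first $i-2$ entries of $e_i^TRQ_1$ being zero, and obtains this purely structurally by two applications of Lemma~\ref{lem:t1}: one to $Q_2$, giving $t(Q_2AQ_2^*)-Q_2(D+t(A))Q_2^*=0_{i-2}\oplus\hat R$, and one to $Q_1$, giving that $t(Q_1BQ_1^*)Q_1-Q_1t(B)$, with $B=Q_2AQ_2^*$, has its last $n-i+1$ rows equal to zero; no entry is ever computed. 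You instead read the row off the representation $R=D'+t(ab^*)$ of Corollary~\ref{cor:2} ($D'$ diagonal), using --- correctly, and this is the crucial point --- that the vector $b$ there is the explicit one of Theorem~\ref{th:0} because $b$ is independent of $A$, and you then verify the telescoping identity $[\conj{b_1},\dots,\conj{b_{i-1}},0,\dots,0]\,G_1\cdots G_{i-2}=\conj{s_{i-1}}\cdots\conj{s_{n-1}}\,e_{i-1}^T$, which checks out. Your route buys an explicit value $v_i=a_i\,\conj{s_{i-1}}\cdots\conj{s_{n-1}}$ (the paper only asserts existence) and ties the statement transparently to the GV representation of Section~\ref{sec:qsr}; the price is the reliance on the closed-form entries of $b$, hence on the preliminary reduction to the case $s_j\ne 0$, which the paper's structural argument never needs. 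One small inaccuracy in that reduction: since $R$ contains the term $-QDQ^*$ and $Q=I_{h-1}\oplus Z\oplus I_{n-k-1}$, the matrix $R$ is block diagonal but its leading and trailing diagonal blocks equal the negated corresponding diagonal blocks of $D$, not zero, so the rows with $i\le h-1$ or $i\ge k+2$ do not vanish in general. They are, however, scalar multiples of $e_i^T$, and the rotations $G_1,\dots,G_{i-2}$ relevant to such rows are identities, so \eqref{eq:gvr} still holds trivially for them; only your justification, not your conclusion, needs this adjustment.
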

\begin{proof}
Let us write $Q=Q_1Q_2$ where $Q_1=G_{1}\cdots G_{i-2}$, $Q_2=G_{i-1}\cdots G_{n-1}$, so that \eqref{eq:gvr} can be rewritten as 
$e_i^TRQ_1=[0,\ldots,0,v_i,d_i,w_i^*]$. This way,
it is enough to show that the $i$th row of $RQ_1$ has the first $i-2$ entries equal to zero.
In view of Lemma \ref{lem:t1}  we have
\[
R':=t(Q_2AQ_2^*)-Q_2(D+t(A))Q_2^*=0_{i-2}\oplus \hat R\in\qsh{1}^n.
\]
Whence
\begin{equation}\label{eq:tmpxx}
Q_2(D+t(A))Q_2^*=t(Q_2AQ_2^*)-0_{i-2}\oplus \hat R.
\end{equation}
Moreover, by definition of $R$ we have
\[
RQ_1=t(Q_1Q_2AQ_2^*Q_1^*)Q_1-Q_1Q_2(D+t(A))Q_2^*.
\]
Setting $B=Q_2AQ_2^*$ and combining the above equation with \eqref{eq:tmpxx} yields
\[
RQ_1=t(Q_1BQ_1^*)Q_1-Q_1t(B)+Q_1(0_{i-2}\oplus \hat R).
\]
Now, since $Q_1(0_{i-2}\oplus\hat R)$ has the first $i-2$ columns
equal to zero, it is sufficient to prove that the $i$th row of
$t(Q_1BQ_1^*)Q_1-Q_1t(B)$ has the first $i-2$ component zero. To this
regard, observe that $Q_1=\tilde Q_1\oplus I_{n-i+1}$, where $\tilde
Q_1\in\mathbb C^{(i-1)\times(i-1)}$, so that partitioning $B$ as
$\left[\begin{smallmatrix}B_{1,1}&B_{1,2}\\ B_{2,1}&B_{2,2}\end{smallmatrix}\right]$,
where $B_{1,1}\in\mathbb C^{(i-1)\times(i-1)}$, by applying again
Lemma \ref{lem:t1} we find that
$t(Q_1BQ_1^*)-Q_1t(B)Q_1^*=W_{i-1}\oplus 0_{n-i+1}$ for some
$W_{i-1}\in\mathbb C^{(i-1)\times(i-1)}$.  This implies that
$t(Q_1BQ_1^*)Q_1-Q_1t(B)$ has the last $(n-i+1)$ rows equal to zero.
This completes the proof.
\end{proof}

Observe that the representation of $R$ that we have found is exactly
the Givens-Vector representation for quasiseparable matrices that is
presented in \cite{vanbarel:book1}. A more general analysis of this
representation is given in Section
\ref{sec:qsr}.

 \section{Reduction to Hessenberg form}\label{sec:rthf}
It is a simple matter to show that the Hessenberg form $H=QAQ^*$
maintains a quasiseparable structure.  In fact, since $D$ is real, we
find that $H=QDQ^* +QUV^*Q^*$ is the sum of a Hermitian matrix
$S=QDQ^*$ and of a matrix $T=QUV^*Q^*$ of rank $k$.  Since $H$ is
Hessenberg, the submatrices contained in the lower triangular part of
$H$ have rank at most 1 so that the submatrices in the lower
triangular part of $S=H-T$ have rank at most $k+1$. On the other hand,
since $S$ is Hermitian, then also the submatrices contained in its
upper triangular part have rank at most $k+1$, thus the submatrices in
the upper triangular part of $H=S+T$ have rank at most $2k+1$ being
the sum of the ranks of the corresponding submatrices of $S$ and $T$
respectively.  This way we find that $H$ is
$(1,2k+1)$-quasiseparable. Actually, we will see later
in Section~\ref{sec:firststep} that the
submatrices contained in the upper triangular part of $H$ have rank
$2k-1$.

The nontrivial problem is to exploit this structure property and to
provide a way to compute the matrix $H$ with a cost lower than
$O(n^3)$ ops needed for general matrices.
In order to arrive at this goal we have to recall the customary
procedure for reducing a matrix $A$ into Hessenberg form which is
based on Givens rotations.
The algorithm can be easily
 described by the pseudo-code reported in Algorithm~\ref{lst:hf}
where the function {\tt givens($v_1,v_2$)} provides the matrix $G(v_1,v_2)$.
      
      \begin{algorithm}
         \caption{Reduction to Hessenberg form by means of Givens rotations}
         \label{lst:hf}
        \begin{algorithmic}[1]
          \For{$j = 1, \dots, n-2$}
            \For{$i = n, \dots, j+2$}
              \State $G \gets \mathtt{givens}(A[i-1,j],A[i,j])$
              \State $A[i-1:i,:] \gets G \cdot A[i-1:i,:]$
              \State $A[:,i-1:i] \gets A[:,i-1:i] \cdot G^*$
            \EndFor
          \EndFor
        \end{algorithmic}
      \end{algorithm}

Denote by $G_{i-1,j}$ the unitary $n\times n$ matrix which performs the
Givens rotation $G={\tt givens}(A( i −1, j ) ,A( i , j ) )$ in the
rows $i-1$ and $i$ at step $j$ of Algorithm \ref{lst:hf} and set
$Q_j=G_{j+1,j}G_{j+2,j}\cdots G_{n-1,j}$. Then  Algorithm  \ref{lst:hf} generates a sequence $A_j$ of matrices such
that
\[\begin{split}
&A_0=A,\quad A_{n-2}=H\\
&A_{j}=Q_{j}A_{j-1}Q_{j}^*, \quad j=1,\ldots,n-2
\end{split}
\]
where the matrix $A_j$ has the form
       \begin{equation}\label{eq:hess}
         A_j=Q_j \dots Q_1 (D + UV^*) Q_1^* \dots Q_j^* = 
         \hat{D} + \hat{U}\hat{V}^* = 
         \begin{bmatrix}
           a^{(j)}_{1,1} & \dots & a^{(j)}_{1,j} & \times & \dots & \times \\
           a^{(j)}_{2,1} & \ddots &  & \vdots & & \vdots \\
           & \ddots & a^{(j)}_{j,j} & \times  & \ldots& \times \\
           & & a^{(j)}_{j+1,j} & \star & \dots & \star \\
           & &           & \vdots &       & \vdots \\
           & &           & \star & \dots & \star
         \end{bmatrix},
       \end{equation}
and the symbols $\times$ and $\star$ denote any numbers.

       For notational simplicity, we denote by $\hat A_j$
       the $(n-j)\times(n-j)$ trailing principal submatrix of $A_j$,
       that is, the submatrix represented by $\star$ in
       \eqref{eq:hess}. Observe that $\hat A_j$ is the part of $A_j$
       that has not yet been reduced to Hessenberg form.  Finally, by
       following the notation of Section \ref{sec:rank-conservation},
       we write $G_i$ or $G_{i,j}$ to denote a unitary matrix
       which applies a Givens transformation in the rows $i$ and
       $i+1$.

The following lemma is useful to get rid of non-generic cases in the process of Hessenberg reduction.

     \begin{lemma} \label{lem:givred}
       Let $v \in \mathbb{C}^n$, $v\ne 0$ and consider Givens
       rotations $G_1, \dots, G_{n-1}$ constructed in such a way that
       $(G_i \dots G_{n-1})v = (w^{(i)*},0,\ldots,0)^*$, where
       $w^{(i)}\in\mathbb C^{i}$, for $i=1,\ldots,n-1$.  If there
       exists $h$ such that $G_h = I$ then one can choose $G_i = I$
       for every $i \geq h$, that is, $(G_1\cdots G_{h-1})v=(w^{(1)*}_1,0,\ldots,0)^*$.
     \end{lemma}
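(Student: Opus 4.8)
The plan is to reduce the statement to one elementary identity about the norms of the partial vectors produced during the reduction. Set $u^{(n)} := v$ and, for $i = n-1, \dots, 1$, let $u^{(i)} := (G_i G_{i+1} \cdots G_{n-1}) v = (w^{(i)*}, 0, \dots, 0)^*$, so that $u^{(i-1)}$ is obtained from $u^{(i)}$ by the single rotation $G_{i-1}$ acting on coordinates $i-1$ and $i$. First I would prove, by downward induction on $i$, the identity
\[
  |w^{(i)}_i|^2 = |v_i|^2 + |v_{i+1}|^2 + \cdots + |v_n|^2
\]
for the $i$-th (and last) entry of $w^{(i)}$. The base case $i=n$ is trivial, and the inductive step is immediate: $G_{i-1}$ is unitary, hence preserves $|u^{(i)}_{i-1}|^2 + |u^{(i)}_i|^2$ while setting coordinate $i$ to zero, and coordinate $i-1$ of $u^{(i)}$ is still $v_{i-1}$ because none of $G_i, \dots, G_{n-1}$ touches it. I would record explicitly that this identity holds for \emph{every} admissible choice of the rotations, since that is precisely what is needed afterwards.

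Next I would use the hypothesis $G_h = I$. By construction $G_h$ is the rotation for which $u^{(h)} = G_h u^{(h+1)}$ has coordinate $h+1$ equal to zero; if $G_h = I$ then $u^{(h)} = u^{(h+1)}$, so that coordinate must already vanish, i.e.\ $w^{(h+1)}_{h+1} = 0$, and the identity above then forces $|v_{h+1}|^2 + \cdots + |v_n|^2 = 0$, so $v_{h+1} = \cdots = v_n = 0$. Consequently, for every index $i$ with $h \le i \le n-1$, the coordinate $i+1$ of the vector on which $G_i$ is constructed — namely $w^{(i+1)}_{i+1}$, whose squared modulus is $\sum_{j \ge i+1} |v_j|^2 = 0$ — already vanishes, so $G_i$ may be chosen to be the identity. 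With all of $G_h, G_{h+1}, \dots, G_{n-1}$ equal to $I$ the product $G_h \cdots G_{n-1}$ collapses, and therefore $(G_1 \cdots G_{h-1}) v = (G_1 \cdots G_{n-1}) v = u^{(1)} = (w^{(1)}_1, 0, \dots, 0)^*$, which is the claimed form.

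The one place that needs a little care — and is really the crux — is the first step: the norm identity must be stated and proved for an arbitrary admissible sequence of rotations, not just for the specific one we eventually construct, because the logic runs as follows: the originally given rotations satisfy the identity, so $v$ has a tail of zeros from position $h+1$ onward, so the tail rotations $G_h, \dots, G_{n-1}$ may be rechosen to be identities. Everything after that is bookkeeping; no estimates or case distinctions are involved.
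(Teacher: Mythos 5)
Your proof is correct and follows essentially the same route as the paper: both rest on the identity $|w^{(i)}_i|^2=\sum_{j\ge i}|v_j|^2$, deduce $w^{(h+1)}_{h+1}=0$ from $G_h=I$, conclude $v_{h+1}=\dots=v_n=0$, and then replace the trailing rotations by identities. The only difference is cosmetic: you establish the norm identity by downward induction on single rotations, whereas the paper obtains it in one line by noting that $G_i\cdots G_{n-1}$ is unitary and acts only on the components $i,\dots,n$ of $v$.
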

\begin{proof}
Since $G_{i}\cdots G_{n-1}$ is a unitary matrix which acts in the last
$n-i+1$ components of $v$, the 2-norm of $v[i:n]$ coincides with the
2-norm of $(G_i\cdots G_{n-1}v)[i:n]$, that is, $|w^{(i)}_i|$. On the
other hand, if $G_h=I$ then
$(w^{(h)*},0,\ldots,0)=(w^{(h+1)*},0,\ldots,0)$ so that
$w^{(h+1)}_{h+1}=0$. This implies that $\|v[h+1:n]\|=0$, whence
$v_i=0$ for $i=h+1,\ldots,n$. This way, one can choose $G_i=I$ for
$i=h+1,\ldots,n-1$.
\end{proof}
     
Observe that in view of Lemma \ref{lem:unitaryQ} the unitary matrix $Q=G_2\cdots G_{n-1}$ is in upper Hessenberg form. Moreover, the rotations $G_i$ are such that
$QAe_1 = \alpha e_1 + \beta e_2$. In view of Lemma~\ref{lem:givred} we 
can assume that if $G_h = I$ then $G_i = I$ for every $i \geq h$, that is, 
$Q = G_2 \dots G_{h-1}$.  

%Moreover,  in the first step of the process of reduction in
%Hessenberg form of the matrix $A$, the
%unitary Hessenberg matrix $Q$ such that $Qv =
%v_{1} e_1 + \alpha e_2$, can be written as $Q=1 \oplus
%\hat{Q}$ where $\hat{Q}$ has the form  $\hat Q
%= \hat G_2 \dots \hat G_{n-1}$ and satisfies
%the hypothesis of the above Lemma. This way,
%if $G_k$ is diagonal for some $k$, we may replace $G_i$ with $I$ for any $i\ge k$.

The quasiseparable structure of the matrices $\hat A_i$ is a consequence of
our main result which is reported in the following

 \begin{theorem} \label{thm:rankstructure}
         Let $U,V,W\in\mathbb C^{n\times k}$,  $S=\diag(d)+t(ab^*)\in\qsh{1}^n$  and
         define
         \[
           A = UV^* + t(UW^*) + S.
         \]
         Let $G_i$, $i=2, \dots,n-1$ be 
         Givens rotations acting on the rows $i$ and $i+1$ such that
         \[
           QA e_1 = a_{1,1} e_1 + \beta e_2,\quad \hbox{where } 
           Q=G_2 \dots G_{n-1} . 
         \]
         Then the matrix $\hat{A}$
         obtained by removing the first row and the first column of
         $QAQ^*$ can be written again as 
         \[
           \hat{A} = \hat{U}\hat{V}^* + t(\hat U\hat W^*) + \hat{S}
         \]
         where $\hat U,\hat W\in\mathbb C^{(n-1)\times k}$, and
         $\hat S=\diag(\hat d+t(\hat a\hat b^*))\in\qsh{1}^{n-1}$ for
         some vectors $\hat d,\hat a,\hat b\in\mathbb
         C^{n-1}$. Moreover, $\hat U$ and $\hat V$ are obtained by
         removing the first row of $QU$ and $QV$, respectively.
\end{theorem}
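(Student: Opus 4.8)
The plan is to track each of the three summands $UV^*$, $t(UW^*)$ and $S$ through the conjugation by $Q$ and the deletion of the first row and column. The starting observation is that every $G_i$ acts only on rows $i,i+1$ with $i\ge2$, so the first row and the first column of $Q$ are $e_1^T$ and $e_1$; hence $Q=1\oplus\tilde Q$ with $\tilde Q\in\mathbb C^{(n-1)\times(n-1)}$ a unitary upper Hessenberg matrix, $\hat A=\tilde Q\,(A[2:n,2:n])\,\tilde Q^*$, and $\hat U:=\tilde Q\,U[2:n,:]$, $\hat V:=\tilde Q\,V[2:n,:]$ are precisely $QU$ and $QV$ with the first row removed, which is the last assertion. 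By Lemma~\ref{lem:givred} we may drop trailing identity rotations and assume the non-trivial rotations composing $\tilde Q$ have $s_i\ne0$, so that Theorem~\ref{th:0} and Corollary~\ref{cor:2} apply to $\tilde Q$ (the degenerate cases, e.g. $\tilde Q=I$ or a single rotation, follow at once from~\eqref{eq:proptbis} and Lemma~\ref{lem:t1}).

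Writing $X_\downarrow$ for the last $n-1$ rows of a matrix $X$, the identity~\eqref{eq:proptbis} and the additivity of $t$ give $A[2:n,2:n]=U_\downarrow V_\downarrow^*+\diag(d_\downarrow)+t\!\big([U_\downarrow,a_\downarrow]\,[W_\downarrow,b_\downarrow]^*\big)$. Conjugating by $\tilde Q$, the rank-$k$ term becomes $\hat U\hat V^*$, and Corollary~\ref{cor:2} applied with real diagonal $\diag(d_\downarrow)$ and low-rank part $[U_\downarrow,a_\downarrow][W_\downarrow,b_\downarrow]^*$ yields
\[
 \tilde Q\Big(\diag(d_\downarrow)+t\big([U_\downarrow,a_\downarrow][W_\downarrow,b_\downarrow]^*\big)\Big)\tilde Q^*=t\!\big([\hat U,\tilde Q a_\downarrow]\,[\tilde Q W_\downarrow,\tilde Q b_\downarrow]^*\big)-\big(\diag(\hat d)+t(\hat a\hat b^*)\big),
\]
where $\hat d\in\mathbb R^{n-1}$ and $\hat b$ is the Givens vector associated with $\tilde Q$ by Theorem~\ref{th:0} (in particular independent of the conjugated matrix). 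Splitting the first term on the right by additivity of $t$ we obtain
\[
 \hat A=\hat U\hat V^*+t\big(\hat U(\tilde Q W_\downarrow)^*\big)+t\big((\tilde Q a_\downarrow)(\tilde Q b_\downarrow)^*\big)-\diag(\hat d)-t(\hat a\hat b^*),
\]
so that $\hat S:=-\diag(\hat d)-t(\hat a\hat b^*)$ is already of the claimed form $\diag(\cdot)+t(\cdot\,\cdot^*)\in\qsh{1}^{n-1}$, and everything reduces to absorbing $t\big((\tilde Q a_\downarrow)(\tilde Q b_\downarrow)^*\big)$ into the $t(\hat U\,\cdot\,)$ term.

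For this I would compute the first column of $A$: inspecting $A=UV^*+t(UW^*)+S$ one finds that below the diagonal the first column of $t(UW^*)$ is $U_\downarrow(W[1,:])^*$ and that of $t(ab^*)$ is $\conj{b_1}\,a_\downarrow$, whence $(Ae_1)[2:n]=U_\downarrow c+\conj{b_1}\,a_\downarrow$ with $c=V^*e_1+(W[1,:])^*$; on the other hand the rotations were chosen so that $\tilde Q\,(Ae_1)[2:n]=\beta e_1$. Therefore, provided $b_1\ne0$, $\tilde Q a_\downarrow=\conj{b_1}^{-1}(\beta e_1-\hat U c)$ lies in $\spa\{e_1\}$ plus the column space of $\hat U$; since $t(e_1x^*)=0$ for every $x$ (the matrix $e_1x^*$ is supported on the first row and $xe_1^*$ on the first column), $t\big((\tilde Q a_\downarrow)(\tilde Q b_\downarrow)^*\big)=t(\hat U N^*)$ for a suitable $N\in\mathbb C^{(n-1)\times k}$. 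Hence $t\big(\hat U(\tilde Q W_\downarrow)^*\big)+t\big((\tilde Q a_\downarrow)(\tilde Q b_\downarrow)^*\big)=t(\hat U\hat W^*)$ with $\hat W:=\tilde Q W_\downarrow+N\in\mathbb C^{(n-1)\times k}$, and we conclude $\hat A=\hat U\hat V^*+t(\hat U\hat W^*)+\hat S$. The assumption $b_1\ne0$ is harmless: at the first reduction step $S$ is diagonal, $t(ab^*)=0$ and nothing has to be absorbed, while the construction just described always returns an $\hat S$ whose Givens vector $\hat b$ has nonzero first entry, so $b_1\ne0$ propagates for as long as $t(ab^*)\ne0$.

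The step I expect to be the real obstacle is exactly this absorption: $(\tilde Q a_\downarrow)(\tilde Q b_\downarrow)^*$ has rank one, but its image under $t$ is in general only $2$-quasiseparable, and what forces it into the column space of $\hat U$ is the non-obvious structural fact that the first column of $A$ already lies in the span of the columns of $U_\downarrow$ together with $a_\downarrow$. Establishing that span relation, and keeping track of the single common Givens vector $\hat b$ carried by all the correction terms produced by Theorem~\ref{th:0}/Corollary~\ref{cor:2}, is where the care lies; the remaining manipulations are routine consequences of~\eqref{eq:proptbis} and the linearity of $t$.
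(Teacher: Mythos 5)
Your argument is essentially the paper's own proof, just organized as restrict-then-conjugate: the paper instead conjugates the full matrix by $Q$, invokes Corollary~\ref{cor:2} to peel off a $\qsh{1}$ residual $R$ with $Re_1=0$, uses $QAQ^*e_1=\alpha e_1+\beta e_2$ to conclude that the rank-one correction $t(Qa(Qb)^*)$ has its first column (below the first two rows) in the column span of $QU$, and then absorbs it into $t(\hat U\hat W^*)$ exactly as you do with your matrix $N$. The $b_1\ne 0$ caveat you flag is also present, implicitly, in the paper's step writing $t(Qa(Qb)^*)[2:n,2:n]=t(\hat u z^*)$ with $\hat u=u[2:n]$ (which likewise requires $b_1\ne 0$), so your explicit discussion of how that nondegeneracy propagates through the reduction is, if anything, a more careful account of the same argument.
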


       \begin{proof} According to Lemma \ref{lem:givred}, we may assume that in the first step of the process 
of reduction in Hessenberg form, the parameters $s_i$ satisfy the
condition $s_i\ne 0$ for $i=2,\ldots,h$, while $s_i=0$, for
$i=h+1,\ldots,n-1$, for some $h\le n-1$.  In view of Lemma
\ref{lem:t1}, without loss of generality we may assume that
$h=n-1$. We have
         \[
           QAQ^* = (QU)(QV)^* + F,\qquad F=Q(t(UW^*)+S)Q^*.
         \]
In view of Corollary~\ref{cor:2}  we have
$F=t(Q(UW^*+ab^*)Q^*)-R$  for $R\in\qsh{1}^n$. Thus
\begin{equation}\label{eq:tmpq}
QAQ^*=(QU)(QV)^*+t(QU(QW)^*)+t(Qa(Qb)^*)-R.
\end{equation}
 Recall from Theorem \ref{th:0} that $Re_1=0$
and that $Q$ has been chosen so that $QAQ^*e_1=\alpha e_1+\beta
e_2$. This fact, together with \eqref{eq:tmpq}, implies that the
vector $u=t(Qa(Qb)^*)e_1$ is such that $u[3:n]$ is in the span of the
columns of $(QU)[3:n,:]$.  In view of \eqref{eq:proptbis} we may write
$t(Qa(Qb)^*)[2:n,2:n]=t(\hat uz^*)$ for $\hat u=u[2:n]$, and for a
suitable $z\in\mathbb C^{n-1}$.  Applying  \eqref{eq:propt}
yields the following representation for the trailing principal
submatrix $\hat A$ of $QAQ^*$ of size $n-1$
\[
\hat A=\hat U\hat V^*+t(\hat U \tilde W^*+\hat u\hat z^*)-\hat R
\]
where $\hat U$, $\hat V$ and $\tilde W$ are obtained by removing the
first row of $U$, $V$ and $W$, respectively, while $\hat
R=R[2:n,2:n]$. Since $\hat u[2:n]$ is in the span of $\hat U[2:n,:]$,
and since the first row of $\hat U$ as well as the first entry of
$\hat u$ do not play any role in the value of $t(\hat U \tilde W^*+\hat
u\hat z^*)$, we may set $\hat u_1$ equal to an appropriate value in such a way
that 
$\hat u$ is in the span of the columns of $\hat U$. This way, the
matrix $\hat U \tilde W^*+\hat u\hat z^*$ has rank at most $k$ and can be
written as $\hat U\hat W^*$ for a suitable $\hat
W\in\mathbb C^{k\times n}$. Thus we have
\[
\hat A=\hat U\hat V^*+t(\hat U \hat W^*)+\hat S
\]
for $\hat S=-\hat R$,
that concludes the proof. 
\end{proof}

Observe that the matrix $A$ defined in \eqref{eq:d+lr} satisfies the
assumptions of the above theorem with $W=0$ and $S=D$ real diagonal. 
This way, Theorem \ref{thm:rankstructure} shows
that the trailing principal submatrix $\hat A_j$ of the sequence generated
by the Hessenberg reduction, maintains the structure $\hat A_j = {U}_j
{V}_j^* + t(U_jW_j^*) + {S}_j$ where $S_j=\diag(d_j)+t(a_jb_j^*)$.

This fact is fundamental to design fast algorithms for
the Hessenberg reduction of a matrix of the form \eqref{eq:d+lr}.  In
order to realize this goal, we need to choose a reliable explicit
representation for $t(U_jW_j^*)$ and $S_j$.  This is the topic of the next
section.

  Note also that the representation $\hat A_j = {U}_j {V}_j^* + t(U_jW_j^*) +
  {S}_j$ implies that $\hat A_j$ is still a quasiseparable matrix of low
  quasiseparability rank. In fact,  $t(U_jW_j^*) + U_jV_j^*$ is (at most) $(k,2k)$-quasiseparable, where
  $k$ is the number of columns of $U_j$. The sum with $S_j$ provides a
  $(k+1,2k+1)$-quasiseparable matrix.  We will prove in
  the next Theorem~\ref{thm:rankreduction} that after the first step it is
  possible to replace $U_j$, $V_j$ and $W_j$ with $n \times (k-1)$
  matrices, thus showing that $\hat A_j$ is a $(k-1,2k-1)$-quasiseparable
  matrix at all the intermediate steps of the Hessenberg reduction.

\subsection{Analysis of the first step} \label{sec:firststep}

       Recall that the matrix $A = A_0$ is of the kind $A =
       D+UV^*$. This is a particular case of the form $A = UV^* +
       t(UW^*)+S$ above where $W = 0$ and $S$ is diagonal. Because of
       this additional structure, the first Hessenberg reduction step
       is somewhat special and we can get a sharper bound for the
       quasiseparability rank of $\hat{A_1}$. This is shown in the next

\begin{comment}                          
       Let $Q_1=G_2\cdots G_{n-1}$ be  such that
       $Q_1 Ae_1 = a_{1,1} e_1 + \beta_1 e_2$ and define $A_1 = 
       Q_1 A Q_1^*$. We can partition $A_1$ in the following way:
       \begin{equation}\label{eq:ah1}\renewcommand{\arraystretch}{1.4}
         A_1 = Q_1 D Q_1^* + (Q_1 U)(Q_1V)^* = 
         \left[ \begin{array}{c|ccccc}
           a_{1,1} & & & v_\alpha^* & & \\ \cline{1-6}
           \beta_1  & & & & &  \\
           0        & & & {\hat{A_1}} & &  \\
           0  & & & & &  \\
         \end{array} \right], \qquad v_\alpha, v_\beta \in \mathbb{C}^{n-1}
       \end{equation}
\end{comment}

       \begin{theorem} \label{thm:rankreduction}
        Assume that the matrix $A$ of \eqref{eq:d+lr} does not have
        the first column already in Hessenberg form, i.e.,
        $A[3:n,1]\ne 0$.  Then the rank of the matrix obtained by
        removing the first two rows of $Q_1U$ is less than $k-1$.
        Moreover, the $(n-1)\times(n-1)$ trailing principal submatrix
        $\hat A_1$ of $A_1=Q_1AQ_1^*$ has lower quasiseparable rank
        $k$.
       \end{theorem}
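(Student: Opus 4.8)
The plan is to prove the two assertions in order; the first is the crux, and the second follows from it by essentially mechanical bookkeeping. Put $c := V^* e_1 \in \mathbb{C}^k$, so that $A e_1 = d_1 e_1 + Uc$ and hence $A[3:n,1] = (Uc)[3:n]$; the hypothesis $A[3:n,1]\neq 0$ therefore forces $c\neq 0$. Since $Q_1 = G_2\cdots G_{n-1}$ acts only on the rows $2,\dots,n$ we have $Q_1 e_1 = e_1$, whereas by construction $Q_1 A e_1 = a_{1,1}e_1 + \beta_1 e_2$. Subtracting $d_1 e_1 = d_1 Q_1 e_1$ gives $Q_1 U c = (a_{1,1}-d_1)e_1 + \beta_1 e_2$, a vector whose entries from the third onward vanish; equivalently $(Q_1 U)[3:n,:]\,c = 0$. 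Hence the $(n-2)\times k$ matrix obtained by deleting the first two rows of $Q_1 U$ has the nonzero vector $c$ in its kernel, and so has rank at most $k-1$, which is the first assertion.

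For the second assertion I would split $A_1 = Q_1 A Q_1^* = S_1 + T_1$ with $S_1 = Q_1 D Q_1^*$ and $T_1 = (Q_1 U)(Q_1 V)^*$. Here $S_1$ is Hermitian and $1$-quasiseparable — this follows from Theorem~\ref{th:0} and \eqref{eq:propt1}, since $t(D)=0$ — so every submatrix of $S_1$ contained in its strictly lower triangular part has rank at most $1$. (In the non-generic cases where some rotation equals the identity one first replaces $Q_1$ by $G_2\cdots G_h$ with all $s_i\neq 0$, which is legitimate by Lemma~\ref{lem:givred}.) Now let $M$ be any submatrix of $\hat A_1 = A_1[2:n,2:n]$ contained in its strictly lower triangular part. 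Such an $M$ avoids the first row of $\hat A_1$, i.e.\ row $2$ of $A_1$, so in the indexing of $A_1$ it uses only rows of index $\geq 3$. Writing $M = M_S + M_T$ accordingly, $M_S$ is a submatrix of $S_1$ contained in its strictly lower triangular part, hence $\rank M_S \leq 1$, while $M_T$ is a submatrix of $T_1 = (Q_1 U)(Q_1 V)^*$ all of whose rows have index $\geq 3$, so $M_T = (Q_1 U)[I,:]\,W^*$ for some $I\subseteq\{3,\dots,n\}$ and some matrix $W$; since $(Q_1 U)[I,:]$ is a submatrix of $(Q_1 U)[3:n,:]$, the first assertion gives $\rank M_T \leq k-1$. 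Therefore $\rank M \leq k$, i.e.\ $\hat A_1$ is $k$-lower-quasiseparable.

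I expect the only genuine obstacle to be the first assertion — specifically, noticing that the Givens sweep on the first column is designed precisely to annihilate $U(V^*e_1)$ below the second row, which is exactly what collapses the rank of $(Q_1 U)[3:n,:]$. Once that is in hand the rest is routine; the one point to watch is to bound the ranks of the two summands $S_1$ and $T_1$ (using that $Q_1 D Q_1^*$ stays $1$-quasiseparable because $D$ is diagonal) rather than passing through the coarser decomposition $\hat A_1 = \hat U\hat V^* + t(\hat U\hat W^*) + \hat S$ of Theorem~\ref{thm:rankstructure}, whose naive use would only yield the weaker bound $2k-1$.
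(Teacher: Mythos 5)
Your proof is correct and follows essentially the same route as the paper: the key step is identical (the kernel vector $c=V^*e_1$ forces $(Q_1U)[3:n,:]$ to have rank at most $k-1$, since $Q_1Uc=(a_{1,1}-d_1)e_1+\beta_1 e_2$), and the second assertion is obtained exactly as in the paper by splitting $\hat A_1$ into the Hermitian $1$-quasiseparable part coming from $Q_1DQ_1^*$ (the paper cites Corollary~\ref{cor:2}, whose proof contains your Theorem~\ref{th:0} plus \eqref{eq:propt1} argument) and the low-rank part $(Q_1U)(Q_1V)^*$ restricted to rows of index at least $3$. Your explicit appeal to Lemma~\ref{lem:givred} for the non-generic case is a harmless extra precaution, not a divergence.
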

       
       \begin{proof}
         Observe that $\hat A_1$ can be written as the sum of a
         Hermitian matrix and of a matrix of rank at most $k$, namely,
         $ \hat{A}_1 = \hat{Q}_1 D \hat{Q}_1^* + \hat{U}_1
         \hat{V}_1^*, $ where $\hat U_1$, $\hat V_1$ and $\hat Q_1$
         are the matrices obtained by removing the first row of
         $Q_1U$, $Q_1V$, and $Q_1$, respectively. Define $x=V^*e_1$ so
         that $x\ne 0$ and $Ae_1=d_1e_1+Ux$. Since $Q_1 A e_1 =
         \alpha_1 e_1 + \beta_1 e_2$, we find that $Q_1U x
         =Q_1Ae_1-d_1Q_1e_1=(\alpha_1-d_1)e_1+\beta_1e_2$, which
         implies $\hat U_1x=\beta_1 e_1$.  Thus, $\hat U_1[2:n-1,:]$
         has rank at most $k-1$. This matrix coincides with the matrix
         obtained by removing the first two rows of $Q_1U$ so that the
         first part of the theorem is proven.  Since $\hat
         U_1[2:n-1,:]$ has rank at most $k-1$, then also the matrix
         $\hat U_1[2:n-1,:]\hat V_1^*$ has rank at most $k-1$.  We can
         conclude that every submatrix in the strictly lower
         triangular part of $\hat A_1$, given by the sum of a
         submatrix of $\hat U_1[2:n-1,:]\hat V_1^*$ which has rank at
         most $k-1$, and a submatrix in the lower triangular part of
         $Q_1 D Q_1^*$ which has rank at most $1$ in view of
         Corollary \ref{cor:2}, can have rank at most $k$. This
         completes the proof.
       \end{proof}

       It is important to note that we are not tracking the structure
       of the whole matrix but only the structure of the trailing
       part that we still need to reduce. This is not a drawback
       since the trailing part is the only information needed to continue the
       algorithm. Moreover, the entire Hessenberg matrix can be recovered at
       the end of the process by just knowing the diagonal and
       subdiagonal elements that are computed at each step together with the
       matrices $U_{n-2}$ and $V_{n-2}$.

\section{Representing quasiseparable matrices}\label{sec:qsr}
    
    Finding good and efficient representations of quasiseparable
    matrices is a problem that has been studied in recent years. 
    Some representations have been introduced and analyzed for the 
    rank $1$ case. Some of them have been extended to (or were originally
    designed for) the higher rank case. 
    
    In this section we provide a representation of quasiseparable
    matrices which, combined with the results of the previous section,
    enables us to design an algorithm for the Hessenberg reduction of
    the matrix $A$ with cost $O(n^2k)$.
    
    \subsection{Givens Vector representations}
    
    A useful family of representations for quasiseparable matrices
    is described in \cite{vanbarel:book1} (for the $1$-quasiseparable case) 
    and is extended to a more general version in \cite{van2005diagonal}. 
We use this kind of representation and adjust it to our framework.
For the sake of clarity, we provide the details of the new notation in order to 
    make this section self-contained. 
    
    \begin{defi}\label{def:gseq}  A tuple $\mathcal G = \left( G_i \right)_{i \in I}$
      on some ordered index set $(I,\leq)$
      is said a \emph{sequence} of Givens rotations. We also define $\prod_{i \in I}G_i$ the product
      in ascending order while $\prod_{i \in \mathrm{rev(I)}}G_i$ denotes 
      the product in descending order. The following
      operations on $\mathcal G$ are introduced:
      \begin{itemize}
        \item 
          $\mathcal G v := \prod_{i \in \mathrm{rev(I)}} G_i v$, for $v\in\mathbb C^n$;
        \item 
          $\mathcal G^* v := \prod_{i \in I} G_i^* v$ for $v\in\mathbb C^n$;
        \item for $J \subseteq I$, with the induced
          order, we call $\mathcal G[J] := \left( G_j \right)_{j \in J}$
          the \emph{slice} of $\mathcal G$ on the indices $J$; 
        \item for Givens sequences $\mathcal G = \left(G_i \right)_{i \in I}$, 
        $\mathcal G' = \left( G'_i \right)_{j \in J}$ , we define
          the product $\mathcal G \mathcal G'$ to be the sequence
          \[
            \mathcal G \mathcal G' := \left( E_i \right)_{i \in I \sqcup J}, 
            \qquad E_i = \begin{cases} G_i \text{ if } i \in I \\
            G'_i \text{ if } i \in J \end{cases}
          \]
          where $\sqcup$ is the disjoint union operator
          and where the order on $I \sqcup J$ is induced by the ones
          on $I$ and $J$ and by the agreement that $G_i < G'_j$ for every
          $i \in I, j \in J$. 
      \end{itemize}      
      The above definitions on the product between a sequence and a vector
      trivially extend to products between sequences and matrices. For instance,
$\mathcal G A:= \prod_{i \in \mathrm{rev(I)}} G_i A$.
    \end{defi}
    
    We are interested in the cases where the index sets $I$ are
    special, in particular we consider the case of univariate sets
    $I\subset \mathbb N$, and the case of bivariate sets
    $I\subset\mathbb N^2$. Let us give first the more simple definition
    of $1$-sequences, which covers the case of univariate sets, and
    then extend it to $k$-sequences for $k>1$, that is, the case of
    bivariate sets.
 
    \begin{defi} \label{def:sequence}
      We say that $\mathcal G$ is a $1$-\emph{sequence} of Givens rotations if 
      $\mathcal G = \left( G_2, \dots, G_{n-1} \right)$. 
    \end{defi}
    
      Notice
      that in this context, the operations already introduced in Definition \ref{def:gseq} 
 specialize in the following way:
      \begin{itemize}
        \item $\mathcal G v := G_{n-1} \dots G_2 v$, for $v \in \mathbb{C}^{n}$;
        \item  $\mathcal G^* v := G_{2}^* \dots G_{n-1}^* v$, for $v \in \mathbb{C}^{n}$;
                 
        \item $\mathcal G[i:j] := 
          ( G_i, \dots, G_j )$, for $2 \leq i < j \leq n-1$, is a \emph{slice} of $\mathcal G$ from $i$ to $j$. 
      \end{itemize}
     Here, and hereafter, we use the notation $i:j$ to mean the tuple
     $( i, i+1, \dots, j )$ for $i<j$.  Sometimes we use the
     expressions $\mathcal G[i:]$ or $\mathcal G[:j]$ for
     $\mathcal G[i:n-1]$ and $\mathcal G[2:j]$, respectively. That is,
     leaving the empty field before and after the symbol ``$:$'' is a
     shortcut for ``starting from the first rotation'' and for ``until
     the last rotation'', respectively.

Below, we recall a useful pictorial representation, introduced in
\cite{vanbarel:book1} and \cite{van2005diagonal}, which effectively
describes the action of the sequence of Givens rotations. We report
the case $n=6$, where every $\gr$ describes a Givens rotation $G_i$ applied
to the pair $(i,i+1)$ of consecutive rows.
    \[
      \mathcal G = ( G_2, \dots, G_n ) = \raisebox{-7pt}{\descendinggrfull} , \qquad
      \mathcal Gv = \raisebox{-7pt}{\descendinggrfull} \left[ \begin{array}{l}
        v_1 \\ v_2 \\ v_3 \\ v_4 \\ v_5 \\ v_6
      \end{array} \right].
    \]

    The definition of 1-sequences is generalized to the case where $I\subset\mathbb N^2$ is a 
    bivariate set of indices, and  where we consider Givens rotations
   $G_{i,j}$ acting on the pair $(i,i+1)$ of consecutive rows for any $j$.
   In this case, the ordering on $\mathbb N^2$ which induces orderings in any subset $I$ of $\mathbb N^2$,  is defined by
    \[
      (i_1,j_1) \leq_G (i_2,j_2) \iff j_1 > j_2 \text{ or } \left( j_1 = j_2 
       \text{ and } i_1 \leq i_2 \right).
    \]
    
    \begin{defi}
      We say that $\mathcal G=(G_{i,j})_{(i,j)\in I}$ is a
      $k$-sequence of Givens rotations if $I = \{ (i,j)\in\mathbb N^2
      \; | \; i = 2, \dots, n-1, j = 1, \dots, \min(i-1,k) \}$ with
      the order induced by $\leq_G$.  With a slight abuse
      of notation we define the sequence $ \mathcal
      G[i_1:i_2] :=(G_{i,j})_{(i,j)\in I'}$, $I'=\{ (i,j)\in\mathbb
      N^2 \; | \; i = i_1, \dots, \min(i_2+k,n-1), j = \max(1,i-i_2+1), \dots, \min(k,i-i_1+1), \quad
      \: 2 \leq i_1 < i_2 \leq n-1\}$ to be a \emph{slice} of
      $\mathcal G$ from $i_1$ to $i_2$, where the ordering in $I'$ is
      induced by the ordering $\leq_G$ valid on the parent set.
    \end{defi}
    
    A pictorial representation  similar to the one given above 
    can be used also in this case. For example, for $k = 2$ and $n=6$ we have
\[\mathcal G=(G_{3,2}, G_{4,2}, G_{5,2}, G_{2,1}, G_{3,1}, G_{4,1}, G_{5,1})\] that is represented by
    \begin{equation} \label{eq:ksequencepicture}
      \mathcal G = \descendinggrfull \descendinggr. 
    \end{equation}
    
    \medskip
    
    Note that for every $i_1 \leq i_2 < i_3$, the slices of $\mathcal G$
    can be factored in the following form:

    \[
      \mathcal G[i_1:i_3] = \mathcal G[i_2+1:i_3] \mathcal G[i_1:i_2], \qquad
      \mathcal G^*[i_1:i_3] = \mathcal G^*[i_1:i_2] \mathcal G^*[i_2+1:i_3],
    \]
where, for notational simplicity, we set
  $\mathcal G^*[i_1:i_2]=\left(\mathcal G[i_1:i_2] \right)^*$.
This property is called {\em slicing of rotations}.
    
    %\begin{rem}
    %  Observe that, in general, even if $\mathcal G$ is a $k$-sequence, 
    %  its slices $\mathcal G[i:j]$ are not $k$-sequences. 
    %  It can be shown that they never are
    %  $k$-sequences if the slicing is not trivial, but they still act
    %  as $k$-sequences when restricted to the space $\mathcal C^{i:j}$. 
    %\end{rem}
    
    Note that the order $\leq_G$ is one of the orders such that 
    $\mathcal G v$ coincides with the multiplication of the vector
    $v$ by the Givens rotations in $\mathcal G$ with the order induced
    by the pictorial representation \eqref{eq:ksequencepicture}. 
    
    It is worth highlighting that the operation of slicing
    a $k$-sequence is equivalent to removing the heads and tails
    from the sequences itself. 
    For example the slice of $\mathcal G$
    defined by $\mathcal G[3:n-2]$ is obtained by taking only the bold
    rotations in the following picture, where $n = 7$, which correspond to
    $G_{4,2},G_{5,2},G_{3,1},G_{4,1}$.
    \[
      \mathcal G = \begin{array}{lllll}
                      &&&&\gr\\
                      &&&\redgr \\
                      && \redgr \\
                      &\gr \\
                      \gr \\
                    \end{array}
                    \begin{array}{llll}
                                  %  \\
                                    \\[1ex]
                                    &&&\gr\\
                                    &&\redgr \\
                                    &\redgr \\
                                    \gr \\
                                  \end{array}.
    \]

   With the basic tools introduced so far we can define the concept of Givens
    Vector representation.
    
    \begin{defi}\label{def:gvr}
      A {\em Givens Vector (GV) representation} of rank $k$ for a Hermitian
      quasiseparable matrix $A$ is a triple $(\mathcal G, W, D)$
      where $\mathcal G$ is a $k$-sequence of Givens rotations, $W \in \mathbb{C}^{k \times (n-1)}$ and $D$ is a diagonal matrix such that
      \begin{itemize}
        \item $D$ is the diagonal of $A$;
        \item for every $i = 1, \dots, n-1$ the subdiagonal elements
         of the $i$-th column of $A$ are equal to the last $n-i$ elements
         of $\mathcal G[i+1:] \underline{w}_i$,   where we define
         \[
           \underline{w}_i := \begin{bmatrix}
             0_{i} \\
             We_i \\
             0_{n-k-i} \\
           \end{bmatrix} \: \text{if } i < n-k, \qquad
           \underline{w}_i := \begin{bmatrix}
            0_{i} \\
            (We_i)[1:n-i] \\
           \end{bmatrix} \: \text{otherwise}
         \]
         where $0_j$ is the $0$ vector of length $j$ if 
         $j > 0$, and is the empty vector otherwise. 
 That is, $\tril(A,-1)e_i=\mathcal G[i+1:] \underline{w}_i$.
      \end{itemize}
      If the triple $(\mathcal G, W, D)$ is a GV
      representation of the matrix $A$ we write that $A = \gv{\mathcal G,
        W, D}$.
    \end{defi}

  We refer to \cite{van2005diagonal} for a detailed analysis of the properties
  of this representation. We recall here only the following facts: 
  
  \begin{itemize}
    \item If $A$ is $k$-quasiseparable then there exists a $k$-sequence
    $\mathcal G$, a matrix $W \in \mathbb{C}^{k \times (n-1)}$ and a diagonal
    matrix $D$ such that $A = \gv{\mathcal G, W, D}$. 
    \item If $A = \gv{\mathcal G, W, D}$ for some $k$-sequence $\mathcal G$, 
    $W \in \mathbb{C}^{k \times (n-1)}$ and $D$ diagonal, then $A$ is 
    $j$-quasiseparable with $j \leq k$, i.e., the existence of the representation guarantees that $A$ is at most $k$-quasiseparable.  
  \end{itemize}
  
  We introduce now an important operation on Givens rotations, called
  \emph{turnover}. The following Lemma can be thought as a partial
  answer to the question whether two Givens rotations commute. It is
  clear that if we have $G_i$ and $G_j$ such that $\lvert i - j \rvert
  > 1$ then $G_iG_j - G_jG_i = 0$. This is also true if the two
  rotations act on the same rows, but it does not hold when they are
  acting on consecutive rows. In the latter case, the turnover gives a
  way to swap the order of the rotations.
  
  \begin{lemma} \label{lem:turnover}
    Let $\mathcal G$ be a sequence of Givens rotations and 
    $F_i$ a Givens rotation acting on the rows $i$ and $i+1$.
    Then there exists another sequence $\hat{\mathcal G}$ and a Givens rotation 
    $\hat{F}_{i-1}$ acting on the rows $i-1$ and $i$ such that
    \[
      \mathcal G F_i = \hat{F}_{i-1} \hat{\mathcal G}. 
    \]
    Moreover, $\hat{\mathcal G}$ differs from $\mathcal G$ only
    in the rotations acting on the rows with indices $(i-1,i)$ and $(i,i+1)$. 
  \end{lemma}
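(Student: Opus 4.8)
The plan is to reduce everything to a single $3\times 3$ identity, since the only interaction is between the rotation $F_i$ (acting on rows $i,i+1$) and whichever rotation of $\mathcal G$ acts on rows $i-1,i$; all other rotations of $\mathcal G$ either commute with $F_i$ (indices differ by more than $1$) or act on rows $i,i+1$ and can be absorbed into $F_i$ before we start. So first I would isolate from $\mathcal G$ the (at most one) rotation $G_{i-1}$ on rows $(i-1,i)$ and, if present, combine the rotation on rows $(i,i+1)$ with $F_i$ into a single rotation $\tilde F_i$; everything else in $\mathcal G$ sits to the left of the rows-$(i-1,i)$ block or to the right of the rows-$(i,i+1)$ block and is untouched. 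What remains to prove is that a product of two $n\times n$ Givens rotations, one on $(i-1,i)$ followed by one on $(i,i+1)$, equals a product of one rotation on $(i,i+1)$ followed by one on $(i-1,i)$ — but that requires three rotations on one side in general, which is exactly the statement: $G_{i-1}\tilde F_i$, as an operator on the three coordinates $i-1,i,i+1$, is a $3\times 3$ unitary matrix, and I claim it can be written as $\hat F_{i-1}\hat G_{i-1}\hat G'_i$ — wait, more precisely the statement as written has $\mathcal G F_i=\hat F_{i-1}\hat{\mathcal G}$, so on the three coordinates the left side $G_{i-1}\tilde F_i$ is factored as $\hat F_{i-1}$ (rows $i-1,i$) times $\hat{\mathcal G}$'s local part, which consists of a rotation on $(i-1,i)$ times a rotation on $(i,i+1)$.

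The key step is therefore the $3\times 3$ factorization: given the $3\times 3$ unitary matrix $M=G_{i-1}\tilde F_i$ (block form with $G_{i-1}$ acting on the top $2\times 2$ corner and $\tilde F_i$ on the bottom $2\times 2$ corner), I would construct $\hat F_{i-1}$ so that $\hat F_{i-1}^*M$ has a zero in position $(2,1)$ — i.e. choose $\hat F_{i-1}$ to rotate the first column of $M$ into a vector with zero second entry. Then $\hat F_{i-1}^*M$ is unitary with a zero in the $(2,1)$ slot; since its first column has unit norm and zero second entry, and the matrix is unitary, the first column is $\alpha e_1$ with $|\alpha|=1$, and then the $(1,2)$ and $(1,3)$ entries are forced to be zero as well (orthogonality of columns / rows), so $\hat F_{i-1}^*M$ is block-diagonal: a scalar $\alpha$ in the corner and a $2\times 2$ unitary $N$ on coordinates $i,i+1$. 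A $2\times 2$ unitary is not quite a Givens rotation (it can have a determinant phase), so I would peel off a diagonal phase: write $N = \hat G_i \cdot \operatorname{diag}$-correction, or more cleanly absorb the leftover phases into the scalar $\alpha$ and into $\hat G_i$, landing at $\hat F_{i-1}^*M = \hat G_{i-1}\hat G_i$ where $\hat G_{i-1}$ is a (possibly trivial, i.e. diagonal-phase) rotation on $(i-1,i)$ and $\hat G_i$ is a genuine rotation on $(i,i+1)$ — this is the local data of $\hat{\mathcal G}$. Reassembling, $M=\hat F_{i-1}\hat G_{i-1}\hat G_i$, which gives $\mathcal G F_i=\hat F_{i-1}\hat{\mathcal G}$ with $\hat{\mathcal G}$ agreeing with $\mathcal G$ outside rows $(i-1,i,i+1)$.

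The main obstacle is bookkeeping rather than depth: one must be careful that the paper's Givens rotations have the specific normalized form with $c\in\mathbb R$, $|s|^2+c^2=1$, so the "$2\times 2$ unitary leftover" step genuinely needs a phase to be shuffled around, and one has to verify the phase can always be placed so that both output rotations are of the prescribed normalized form (this is where the freedom in $\alpha$ is spent). A clean way to sidestep case analysis on whether $\mathcal G$ actually contains a rotation on rows $(i-1,i)$ or $(i,i+1)$ is to allow those "rotations" to be the identity, which the normalized form permits ($c=1,s=0$); then the three-coordinate argument is uniform. I would also note explicitly, to match the final sentence of the statement, that since $\hat F_{i-1}^*M$ differs from $M$ only on coordinates $\{i-1,i,i+1\}$ and we only modified the local $(i-1,i)$ and $(i,i+1)$ rotations, $\hat{\mathcal G}$ indeed differs from $\mathcal G$ only in the rotations acting on rows $(i-1,i)$ and $(i,i+1)$.
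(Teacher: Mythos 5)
Your reduction step does not match how the paper multiplies a sequence: by Definition~\ref{def:gseq}, $\mathcal G$ acts as the product in \emph{descending} order, so as a matrix $\mathcal G F_i = G_{n-1}\cdots G_2\,F_i$. Commuting $F_i$ leftwards past $G_2,\dots,G_{i-2}$ it meets $G_{i-1}$, while the sequence's own rotation $G_i$ on rows $(i,i+1)$ sits on the \emph{other} side of $G_{i-1}$; since $G_{i-1}$ commutes with neither, you cannot ``absorb the rotation on rows $(i,i+1)$ into $F_i$''. The genuine local identity to prove is the three-rotation turnover $G_iG_{i-1}F_i=\hat F_{i-1}\hat G_i\hat G_{i-1}$ (pattern B--A--B $=$ A--B--A on the coordinates $i-1,i,i+1$), not a refactorization of a two-rotation product $G_{i-1}\tilde F_i$, and your target $\hat F_{i-1}\hat G_{i-1}\hat G_i$ has the output rotations in the wrong order as well (two adjacent rotations on rows $(i-1,i)$ would merge into one, which is too little freedom). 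This is not mere bookkeeping: your key step relies on the first column of $M=G_{i-1}\tilde F_i$ having a zero third entry, so that one rotation $\hat F_{i-1}$ on rows $(i-1,i)$ reduces it to $\alpha e_1$ and unitarity gives a block-diagonal remainder. For the correct local matrix $M=G_iG_{i-1}F_i$ the first column $G_iG_{i-1}e_1$ generically has all three entries nonzero, so that argument collapses. The standard constructive proof (this is the shift-through lemma of \cite{vanbarel:book1}, which the paper cites instead of proving) works on the third column instead: choose $\hat F_{i-1}$ to annihilate the $(1,3)$ entry of $\hat F_{i-1}^*M$, choose $\hat G_i$ so that $\hat G_i e_3$ equals the resulting third column, and then unitarity forces $\hat G_i^*\hat F_{i-1}^*M$ to be of the form (rotation on rows $(i-1,i)$)$\,\oplus\,1$, which is $\hat G_{i-1}$.

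Separately, the phase issue you flag is real and is not disposed of by ``the freedom in $\alpha$'': once you fix which entry is annihilated, the paper's normalized rotations ($c\in\mathbb R$, $|s|^2+c^2=1$) leave essentially only a sign choice, and in fact with that strict normalization the identity cannot always hold verbatim — the $(3,3)$ entry of any product $\hat F_{i-1}\hat G_i\hat G_{i-1}$ equals the real cosine of $\hat G_i$, while the $(3,3)$ entry of $G_iG_{i-1}F_i$, namely $c_1c_2-\bar s_1 c_A s_2$ in obvious notation, need not be real. So a complete proof must either work with the slightly larger class of rotations used in \cite{vanbarel:book1} (complex cosine allowed) or carry along a unimodular diagonal factor and show it is harmless for the way the lemma is used; your sketch asserts but does not establish that the phases can be placed in the prescribed normalized form.
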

  
  See \cite{vanbarel:book1} for a proof of this fact. The pictorial
  representation of the Givens rotations can be helpful to understand
  how the turnover works.
  \[
    \begin{array}{llll}
      &&&\gr \\
      &&\gr \\
      & \gr && \gr \\
      \gr \\
      \end{array} \qquad = \qquad
      \begin{array}{llll}
        &&&\gr \\
        \hat\gr &&\hat\gr \\
        & \hat\gr && \\
        \gr \\
        \end{array}
  \]
  The above lemma
  can be easily extended to $k$-sequence of rotations.

  \begin{cor}
    Let $\mathcal G$ be a $k$-sequence of Givens rotations and $F_i$ a
    Givens rotation acting on the rows $i$ and $i+1$.  Then there
    exists another $k$-sequence $\hat{\mathcal G}$ and a Givens
    rotation $\hat{F}_{i-k}$ acting on the rows $i-k$ and $i-k+1$ such
    that
    \[
      \mathcal G F_i = \hat{F}_{i-k} \hat{\mathcal G},
    \]
where $\hat F_{i-k}=I$ if $i-k\le 0$.
    Moreover, $\hat{\mathcal G}$ differs from $\mathcal G$ only
    in the rotations of indices $(i-j+1,j)$ and $(i-j,j)$ for $j = 1, \dots, k$. 
  \end{cor}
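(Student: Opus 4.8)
The plan is to reduce the corollary about $k$-sequences to repeated application of Lemma~\ref{lem:turnover} for ordinary sequences. The key observation is that a $k$-sequence $\mathcal G$, when restricted to the rotations whose row-index parameter is a fixed $j \in \{1,\dots,k\}$, is (essentially) a $1$-sequence, and the $k$ ``diagonals'' of $\mathcal G$ are interleaved in a way that, for a turnover purpose, can be peeled off one at a time from the ascending-order end. So first I would fix $F_i$ acting on rows $(i,i+1)$ and peel off from $\mathcal G$ the rotation $G_{i,1}$ that sits immediately to the left of $F_i$ in the product order; applying Lemma~\ref{lem:turnover} to the pair $(G_{i,1},F_i)$ produces a rotation $\hat F_{i-1}$ acting on rows $(i-1,i)$, together with a modified rotation in position $(i,1)$ and a new rotation in position $(i-1,1)$ — i.e. exactly the ``differs only in indices $(i,1)$ and $(i-1,1)$'' clause from the statement for $j=1$.

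Next I would push $\hat F_{i-1}$ leftward through the next diagonal. Because of the ordering $\leq_G$, the rotation $G_{i-1,2}$ (when $i-1\ge 3$, so that $(i-1,2)$ is a legal index) is the next rotation acting on rows $(i-2,i-1)$ or $(i-1,i)$ that $\hat F_{i-1}$ meets as it moves left; applying Lemma~\ref{lem:turnover} again converts $\hat F_{i-1}$ into some $\hat F_{i-2}$ acting on rows $(i-2,i-1)$ and alters only positions $(i-1,2)$ and $(i-2,2)$. Iterating this $k$ times, at step $j$ we turn a rotation on rows $(i-j+1,i-j+2)$ into one on rows $(i-j,i-j+1)$, touching only indices $(i-j+1,j)$ and $(i-j,j)$. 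After $k$ steps we have pushed the rotation all the way to the front of the $k$-sequence, arriving at $\hat F_{i-k}$ acting on rows $(i-k,i-k+1)$, and the net change to $\mathcal G$ is confined to the indices $(i-j+1,j)$ and $(i-j,j)$ for $j=1,\dots,k$, precisely as claimed. The boundary case $i-k\le 0$ is handled by noting that the leftward migration simply runs out of rows: once the rotation would act on rows with nonpositive index it has been completely absorbed into $\hat{\mathcal G}$, so one sets $\hat F_{i-k}=I$.

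The main obstacle — really the only delicate point — is bookkeeping the index ranges so that at each of the $k$ stages the rotation being turned over genuinely lies adjacent, in the $\leq_G$ order, to the rotation we claim it interacts with, and that the modified/created rotations land in legal index slots of a $k$-sequence. Near the left edge of the array (small $i$, or $i$ close to $k+1$) some of the target index pairs $(i-j,j)$ fall outside the index set $I$ of the $k$-sequence; there one invokes the convention that the corresponding Givens rotation is the identity (consistent with the padding conventions in Definition~\ref{def:gvr}), and the turnover at that stage is vacuous. I would present this with the picture analogous to the one following Lemma~\ref{lem:turnover}, shifting a single $\gr$ leftward one diagonal at a time across the staircase, which makes the index arithmetic transparent; the formal statement then follows by induction on $k$, the base case $k=1$ being exactly Lemma~\ref{lem:turnover}.
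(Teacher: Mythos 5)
Your overall plan---push $F_i$ from the right through the $k$ staircases of $\mathcal G$ one at a time, doing one turnover per staircase and commuting past all rotations acting on far-away rows---is exactly the argument the paper leaves implicit behind its picture, and with correct bookkeeping it does prove the factorization $\mathcal G F_i=\hat F_{i-k}\hat{\mathcal G}$. The gap is in the bookkeeping of the very first step. By the definition of $\leq_G$ (pairs with larger $j$ are smaller) and of the action $\mathcal G v=\prod_{\mathrm{rev}(I)}G_{i,j}\,v$, the factors of $\mathcal G$ standing farthest to the right in the matrix product, hence the first ones $F_i$ meets, are those of the diagonal $j=k$, while the $j=1$ diagonal is the leftmost block. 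So $G_{i,1}$ does not sit next to $F_i$: all rotations of the diagonals $j=2,\dots,k$ lie between them, and those acting on rows $i-1,i,i+1$ do not commute with $F_i$, so you cannot begin with a turnover against the $j=1$ diagonal. Traversing in the correct order $j=k,k-1,\dots,1$, the modified slots are $(i-1,k),(i,k)$, then $(i-2,k-1),(i-1,k-1)$, and finally $(i-k,1),(i-k+1,1)$, i.e.\ the pattern of the statement with $j$ replaced by $k+1-j$; this is also what the paper's own picture after the corollary shows (the rightmost staircase is the one altered at rows $i-1,i$). Your derivation reproduces the printed index pattern only because it silently reverses the order of the diagonals, which the definition of a $k$-sequence does not permit; as written, the first turnover you perform is not between adjacent factors, and the argument does not go through until the traversal order is corrected.

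Two smaller points. ``Applying Lemma~\ref{lem:turnover} to the pair $(G_{i,1},F_i)$'' is not a legitimate use of that lemma: those two rotations act on the same rows $(i,i+1)$, and their product is a single $2\times 2$ rotation (up to a phase); no rotation on rows $(i-1,i)$ can arise from such a pair. A turnover always involves three rotations---the travelling one together with the two staircase rotations on row pairs $i$ and $i-1$ of the diagonal being crossed---which is what Lemma~\ref{lem:turnover} encodes when applied to the whole diagonal viewed as a $1$-sequence; your later steps use it in this correct form, so this is a matter of restating the first step rather than of substance. Finally, the border discussion is too loose: the travelling rotation is absorbed not when it ``would act on rows with nonpositive index'' but as soon as it reaches the short end of a diagonal, i.e.\ row pair $j+1$ of diagonal $j$, where it merges with $G_{j+1,j}$ (this already happens for $i=k+1$); and the padding convention of Definition~\ref{def:gvr} concerns the vectors of a GV representation, not missing rotations of a $k$-sequence, so it cannot be invoked to justify setting the absent rotations to the identity.
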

  
  Again, a pictorial representation of this fact can be useful to
  figure out the interplay of the rotations. Below, we report the case
  where $i>k+1$.
  
  \[
    \begin{array}{llllll}
      &&&& \gr & \\
      &&&\gr && \gr \\
      && \gr && \gr \\
      & \gr && \gr && \gr \\
      \gr && \gr \\
      \end{array} \qquad = \qquad
          \begin{array}{llllll}
            &&&& \gr & \\
            &\hat\gr &&\hat\gr && \gr \\
            && \hat\gr && \hat\gr \\
            & \gr && \hat\gr && \\
            \gr && \gr \\
            \end{array} 
  \]
  
  \medskip
  
  The above operations are very cheap. The cost of the computation
  of a turnover is $O(1)$
  in case of $1$-sequences and $O(k)$ in case of $k$-sequences. 
  
  We answer now to the following question: Given a $k$-sequence $\mathcal G$
  and a matrix $A$ of quasiseparable rank $k$, there exist appropriate matrices
  $W \in \mathbb{C}^{k \times (n-1)}$ and $D$ diagonal such that 
  $A = \gv{\mathcal G, W, D}$?
  
  \begin{lemma} \label{lem:qscheck}
    Let $A$ be a Hermitian matrix and $\mathcal G$ a $k$-sequence
    of Givens rotations. Then $B=\mathcal G^*A$ is lower banded
    with a bandwidth of $k$, i.e., $b_{i,j}=0$ for $i-j>k$, if and only if 
    the matrix $A$ admits a representation
    of the form $\gv{\mathcal G, W, D}$ for some $W \in \mathbb{C}^{k \times n}$ and $D$ real diagonal. 
  \end{lemma}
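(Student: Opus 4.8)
The plan is to prove both implications by relating the sparsity pattern of $B = \mathcal{G}^* A$ to the definition of the GV representation. First I would recall that by Definition~\ref{def:gvr}, for $A = \gv{\mathcal{G}, W, D}$ one has $\tril(A,-1) e_i = \mathcal{G}[i+1:]\,\underline{w}_i$ for each $i$, where $\underline{w}_i$ has support in rows $i+1, \dots, i+k$ (truncated near the bottom). The action of $\mathcal{G}[i+1:]$ on $\underline{w}_i$ only mixes rows $i+1, \dots, n$, so the lower-triangular part of column $i$ is indeed produced by applying those rotations. The key observation is that applying $\mathcal{G}^*$ to $A$ undoes exactly the rotations $\mathcal{G}[i+1:]$ on the $i$-th column (the rotations in $\mathcal{G}[2:i]$ act on rows $\leq i$ and hence can only affect the diagonal/upper part of column $i$, which is irrelevant to the subdiagonal pattern after one checks the indices carefully using the slicing-of-rotations identity). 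Therefore, column by column, $\mathcal{G}^* A$ has its subdiagonal entries in column $i$ equal to the nonzero entries of $\underline{w}_i$, which live in rows $i+1, \dots, i+k$; this is precisely the statement that $B$ is lower banded with bandwidth $k$.

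For the forward direction, I would argue as follows. Suppose $B = \mathcal{G}^* A$ satisfies $b_{i,j} = 0$ for $i - j > k$. I would define $D$ to be the diagonal of $A$ (which is real since $A$ is Hermitian) and recover $W$ by reading off, for each column $j$, the entries $b_{j+1,j}, \dots, b_{\min(j+k,n),j}$ of $B$ and placing them into the $j$-th column of $W$ (suitably padded, matching the $\underline{w}_j$ convention). Then I would verify that $A = \mathcal{G} B$ reproduces $\gv{\mathcal{G}, W, D}$: applying $\mathcal{G}$ back to $B$, the $i$-th column of $\tril(A,-1)$ equals $\mathcal{G}[i+1:]$ applied to the banded column of $B$, using again that the rotations in $\mathcal{G}[2:i]$ do not touch rows below $i$. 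Here I would lean on the slicing identity $\mathcal{G}[i_1:i_3] = \mathcal{G}[i_2+1:i_3]\mathcal{G}[i_1:i_2]$ and on Lemma~\ref{lem:unitaryQ} / the structure of $k$-sequences to split $\mathcal{G}$ appropriately at each column.

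The main obstacle I anticipate is the bookkeeping around which rotations in the $k$-sequence act on which rows, especially near the top of the matrix (where $\mathcal{G}[i+1:]$ is truncated because indices $i < k$ force fewer rotations) and near the bottom (where $\underline{w}_i$ is truncated). The ordering $\leq_G$ on the bivariate index set means a slice $\mathcal{G}[i+1:]$ is not simply a contiguous block of rotations in the naive sense, so I would need to carefully invoke the slicing-of-rotations property and the pictorial description to confirm that $\mathcal{G}[2:i]$ genuinely leaves rows $i+1,\dots,n$ untouched and that $\mathcal{G}[i+1:]\underline{w}_i$ produces entries only in rows $i+1,\dots,i+k$. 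Once the index arithmetic is pinned down, both directions reduce to the column-by-column verification sketched above, and the equivalence follows by noting that the reconstruction of $(W,D)$ from $B$ is the exact inverse of reading $B$ off from $\gv{\mathcal{G},W,D}$.
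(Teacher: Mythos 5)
Your plan hinges on a claim that is false for $k$-sequences, and you have in fact flagged it yourself as the thing to be "confirmed": that the rotations in $\mathcal G[2:i]$ act only on rows $\leq i$ and hence leave rows $i+1,\dots,n$ untouched. By the definition of a slice, $\mathcal G[:i]$ contains rotations $G_{\ell,j}$ with row index $\ell$ up to $i+k-1$, so it acts on rows up through $i+k$ (even for $k=1$ it touches row $i+1$). Consequently your intermediate assertion that the subdiagonal entries of column $i$ of $B=\mathcal G^*A$ \emph{equal} the entries of $\underline w_i$ is not true: writing $A=\tril(A,-1)+\triu(A)$ and using the slicing identity, one gets $B e_i=\mathcal G^*[:i]\bigl(\triu(A)e_i+\underline w_i\bigr)$, and $\mathcal G^*[:i]$ mixes rows $i+1,\dots,i+k$ with the rows above them. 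What survives---and is all that is needed---is a support argument, which is exactly the paper's proof: both $\triu(A)e_i$ and $\underline w_i$ vanish below row $i+k$, and $\mathcal G^*[:i]$ only involves the first $i+k$ rows, so $B e_i$ vanishes below row $i+k$. Your stronger entrywise identity is neither needed nor available.

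The same issue breaks your converse construction. Reading $W e_j$ off the banded entries $b_{j+1,j},\dots,b_{j+k,j}$ of $B$ does not give a GV representation of $A$, because those entries have already been altered by $\mathcal G^*[:j]$; since $\mathcal G[j+1:]$ is unitary, the only admissible choice is $\underline w_j=\mathcal G^*[j+1:]\tril(A,-1)e_j$, which in general differs from $(Be_j)[j+1:j+k]$. The paper instead sets $We_j=\bigl(\mathcal G^*[j+1:]\,A e_j\bigr)[j+1:j+k]$ (apply only the tail slice to column $j$, not all of $\mathcal G^*$) and $D=\mathrm{diag}(A)$, and then uses the bandedness of $B$ precisely to show that $\mathcal G^*[j+1:]Ae_j-\triu(A)e_j$ has support in rows $j+1,\dots,j+k$, whence $\tril(A,-1)e_j=\mathcal G[j+1:]\underline w_j$ as Definition~\ref{def:gvr} requires. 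So your overall column-by-column strategy is the right one, but both the forward invariant and the recovery formula for $W$ must be repaired as above for the argument to go through.
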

  
  \begin{proof}
    We first suppose that $A = \gv{\mathcal G, W, D}$. 
    Recall that, by definition of GV representation, $\tril(A,-1) e_i = \mathcal G[i+1:] \underline w_i$ for $i=1,\ldots,n-1$. This implies that 
    \[
      \mathcal G^* \tril(A,-1) e_i = \mathcal G^* \mathcal G[i+1:] \underline w_i 
      = \mathcal G^*[:i] \mathcal G^*[i+1:] \mathcal G[i+1:] \underline w_i =  \mathcal G^*[:i] \underline w_i. 
    \]
    We also have
    \[
      \mathcal G^* \triu(A) e_i = 
      \mathcal G^*[:i] \mathcal G^*[i+1:] \triu(A) e_i = 
      \mathcal G^*[:i] \triu(A) e_i,
    \]
    since $G^*[i+1:]$ is acting on rows that are null. 
    So by decomposing $A = \tril(A,-1) + \triu(A)$ we have
    \[
      \mathcal G^* A e_i = \mathcal G^* \triu(A) e_i + 
        \mathcal G^* \tril(A,-1) e_i = \mathcal G^*[:i] (\triu(A) e_i
          + \underline w_i ).
    \]
    Now observe that the rotations inside $\mathcal G^*[:i]$ only
    act on the first $i+k$ rows. This implies that, since both $\underline w_i$ and $\triu(A) e_i$
    have all the components with index strictly
    bigger than $i+k$ equal to zero, 
    the same must hold for $\mathcal G^*[:i] (\underline w_i + \triu(A)e_i)$, and 
    this completes the proof. The converse is also true. In fact, if $\mathcal G^* A$ is lower banded with bandwidth $k$
    we can build $W$ by setting
    $We_i = \left( \mathcal G^*[i+1:] A e_i \right)[i+1:i+k]$ and $D$
    equal to the diagonal of $A$. Then the equation $A = \gv{\mathcal G, W, D}$
    can be verified by direct inspection. 
  \end{proof}

  To simplify the notation when talking about ranks in the lower part
  of quasiseparable matrices, we say that a $k$-sequence  $\mathcal G$ 
  \emph{spans} $U\in\mathbb C^{n\times k}$ if
  there exists $Z\in\mathbb C^{k\times k}$ such that $\mathcal G^* U = \left[\begin{smallmatrix}
   Z \\
   0 \\
  \end{smallmatrix}\right]$. This definition is motivated by the
  following
  
  \begin{lemma} \label{lem:qrcheck}
    If $\mathcal G$ spans $U \in \mathbb{C}^{n \times k}$ then,  
    for every $V \in \mathbb{C}^{n \times k}$,
    $W \in \mathbb{C}^{k \times (n-1)}$ and $D$ diagonal,
    the matrix $A_1 = UV^* + \gv{\mathcal G, W, D}$
    is lower $k$-quasiseparable and $A_2 = t(UV^*) + \gv{\mathcal G, W, D}$
    is $k$-quasiseparable. In particular, both $\mathcal G^* A_1$ and
    $\mathcal G^* A_2$ are lower banded with bandwidth $k$. 
  \end{lemma}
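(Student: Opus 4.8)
The plan is to reduce both conclusions (and the ``in particular'' clause) to the single fact that $\mathcal G^*A_1$ and $\mathcal G^*A_2$ are lower banded with bandwidth $k$. First recall that $\gv{\mathcal G,W,D}$ is Hermitian by Definition~\ref{def:gvr}, so Lemma~\ref{lem:qscheck} already gives that $\mathcal G^*\gv{\mathcal G,W,D}$ is lower banded with bandwidth $k$. Since $\mathcal G$ spans $U$ we have $\mathcal G^*U=\left[\begin{smallmatrix}Z\\ 0\end{smallmatrix}\right]$, hence $\mathcal G^*A_1=(\mathcal G^*U)V^*+\mathcal G^*\gv{\mathcal G,W,D}=\left[\begin{smallmatrix}ZV^*\\ 0\end{smallmatrix}\right]+\mathcal G^*\gv{\mathcal G,W,D}$; the first summand is supported on the first $k$ rows, hence is lower banded with bandwidth $k$, and so is the sum. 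This disposes of $\mathcal G^*A_1$.

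The technical core concerns $A_2$. Here one writes $t(UV^*)=\tril(UV^*,-1)+\triu(VU^*,1)$ (see \eqref{eq:tk}) and treats the two pieces separately, the main tool being the auxiliary claim: \emph{if a vector $x\in\mathbb C^n$ is supported on the first $i$ rows, then $\mathcal G^*x$ is supported on the first $i+k$ rows}. I would prove this from the slicing of rotations $\mathcal G^*=\mathcal G^*[:i]\,\mathcal G^*[i+1:]$: the rotations in $\mathcal G^*[i+1:]$ all act on rows with index at least $i+1$, so this factor leaves $x$ unchanged, while — exactly as observed in the proof of Lemma~\ref{lem:qscheck} — the rotations in $\mathcal G^*[:i]$ act only on the first $i+k$ rows. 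Applying the auxiliary claim to the $i$-th column of $\triu(VU^*,1)$, which is supported on the first $i-1$ rows, shows that $\mathcal G^*\triu(VU^*,1)$ is lower banded with bandwidth $k$. For $\tril(UV^*,-1)$, setting $\eta_i:=V^*e_i$ one has $\tril(UV^*,-1)e_i=U\eta_i-\left[\begin{smallmatrix}(U\eta_i)[1:i]\\ 0\end{smallmatrix}\right]$, so that $\mathcal G^*\tril(UV^*,-1)e_i=\left[\begin{smallmatrix}Z\eta_i\\ 0\end{smallmatrix}\right]-\mathcal G^*\left[\begin{smallmatrix}(U\eta_i)[1:i]\\ 0\end{smallmatrix}\right]$: the first term is supported on the first $k$ rows, and by the auxiliary claim the second term is supported on the first $i+k$ rows. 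Hence $\mathcal G^*t(UV^*)$, and therefore $\mathcal G^*A_2=\mathcal G^*t(UV^*)+\mathcal G^*\gv{\mathcal G,W,D}$, is lower banded with bandwidth $k$.

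It then remains to collect the conclusions. The matrix $A_2=t(UV^*)+\gv{\mathcal G,W,D}$ is Hermitian, being a sum of Hermitian matrices; together with the bandedness of $\mathcal G^*A_2$, Lemma~\ref{lem:qscheck} yields $A_2=\gv{\mathcal G,W',D'}$ for suitable $W',D'$, so $A_2$ is $k$-quasiseparable by the properties of Givens vector representations recalled after Definition~\ref{def:gvr}. Moreover $A_1-A_2=UV^*-t(UV^*)=\triu(UV^*,0)-\triu(VU^*,1)$ is upper triangular, so $\tril(A_1,-1)=\tril(A_2,-1)$; since every submatrix contained in the strictly lower triangular part of the $k$-quasiseparable matrix $A_2$ has rank at most $k$, the same holds for $A_1$, which is therefore lower $k$-quasiseparable. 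I expect the auxiliary support-propagation claim to be the delicate point: it is where the bandwidth stays $k$ rather than the $2k$ that a naive rank count of $UV^*$ plus the quasiseparable part would suggest, and it rests on carefully tracking the slicing of the $k$-sequence $\mathcal G$ and the precise rows on which $\mathcal G^*[:i]$ acts.
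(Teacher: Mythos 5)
Your proof is correct, and it uses the same basic toolbox as the paper (the span property giving $\mathcal G^*U=\left[\begin{smallmatrix}Z\\0\end{smallmatrix}\right]$, Lemma~\ref{lem:qscheck}, and the fact that $\mathcal G^*$ pushes supports down by at most $k$ rows), but it organizes the deductions in the opposite direction. The paper first asserts that the bandedness of $\mathcal G^*A_1$ already gives the lower $k$-quasiseparability of $A_1$ (a step it leaves implicit, and which for the non-Hermitian $A_1$ requires a rank argument on the blocks of the unitary, banded matrix $\mathcal G$), transfers this to $A_2$ because the two matrices share their strictly lower triangular part, gets the upper bound from Hermitian symmetry, and finally obtains the bandedness of $\mathcal G^*A_2$ in one line by writing $A_2=A_1+R$ with $R$ upper triangular and observing that $\mathcal G^*$, being a product of $k$ upper Hessenberg factors, is itself lower banded with bandwidth $k$. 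You instead prove the bandedness of $\mathcal G^*A_2$ directly by a column-wise support-propagation argument (your auxiliary claim is exactly the bandedness of $\mathcal G^*$, proved via slicing as in the proof of Lemma~\ref{lem:qscheck}), then invoke the converse part of Lemma~\ref{lem:qscheck} on the Hermitian matrix $A_2$ to get a rank-$k$ GV representation and hence $k$-quasiseparability, and only afterwards deduce the lower quasiseparability of $A_1$ from the shared strictly lower part. What your route buys is that every rank statement is reduced to the Hermitian case, where the equivalence of Lemma~\ref{lem:qscheck} does all the work, so you never need the unstated ``banded $\mathcal G^*A_1$ implies lower $k$-quasiseparable $A_1$'' step; what the paper's route buys is brevity in the second bandedness claim, which your explicit splitting of $t(UV^*)$ into $\tril(UV^*,-1)+\triu(VU^*,1)$ essentially re-derives by hand.
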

  
  \begin{proof}
    For the first part of the Lemma it suffices to observe 
    that $\mathcal G^* A_1$ is lower banded
    with bandwidth $k$. This follows directly by noting that 
     $A=\gv{\mathcal G, W, D}+UV^*$. Since $\mathcal G^* UV^* = \left[ \begin{smallmatrix}
      Z \\ 0
    \end{smallmatrix} \right] V^*$ and $\mathcal G^* \gv{\mathcal G, W, D}$
    is lower banded by Lemma~\ref{lem:qscheck}, we conclude that
    also $\mathcal G^* A_1$ is lower banded with bandwidth $k$. Since
    the strictly lower part of $A_2$
    coincides with the one of $A_1$ we find that
    also $A_2$ is lower $k$-quasiseparable. Given that $A_2$ is Hermitian, 
    we conclude that $A_2$ is also upper $k$-quasiseparable. To see that
    also $\mathcal G^* A_2$ is lower banded we can write
    \[
      \mathcal G^* A_2 = \mathcal G^* (A_1 - \triu(UV^*) + \triu(VU^*, 1)) = 
        \mathcal G^* A_1 + \mathcal G^* R,
    \]
    where $R$ is upper triangular. 
    Since $\mathcal G^*$, represented as a matrix, is the product of $k$ upper Hessenberg matrices, it is lower banded with bandwidth $k$. This implies that also $\mathcal G^* R$ is lower banded with bandwidth $k$ and so the same must hold also for $\mathcal G^* A_2$. 
  \end{proof}
  
  \begin{rem} \label{rem:qrlink}
    The above Lemma shows how the Givens rotations in a GV representation of 
    a matrix in $\qsh{k}$ are sufficient to determine the column span
    of the submatrices contained in the lower triangular part. These
    matrices give the same information obtained by knowing the matrix $U$
    in the $D + t(UV^*)$ representation. 
  \end{rem}
      
  We need to find efficient algorithms to perform operations on this
  class of matrices. More precisely, in order to implement in terms of
  algorithms the constructive proofs given in
  Section~\ref{sec:rank-conservation}, we need to explain how to
  efficiently perform the following tasks assuming we are given GV
  representations of $M=D+t(UV^*)=\gv{\mathcal G,W,D}\in\qsh{k}$ and of
  $S=D_S+t(uv^*)\in\qsh{1}$, where $U,V\in\mathbb C^{n\times k}$,
  $\mathcal G$ spans $U$,
  $u,v\in\mathbb C^n$ and $u=Ux$ for some vector $x\in\mathbb C^k$:
  
  \begin{enumerate}
    \item Compute a GV representation of rank $k$ of $M + S$.
    \item Given a unitary upper Hessenberg matrix $P$, compute a GV
      representation of rank $k$ of $t(PU(PV)^*)$, and a GV
      representation of rank 1 of $R = PMP^* -t(PU(PV)^*)$.
  \end{enumerate}
  
   We start by analyzing the problem of computing $M+S$.   Since  $S=D_S+t(Uxv^*)$, in view
   of Lemma~\ref{lem:qrcheck}, we find that $\mathcal G^*S$ is lower
   banded with bandwidth $k$ and so by applying
   Lemma~\ref{lem:qscheck} there exists $W_S \in \mathbb{C}^{n \times
     k}$ and $\hat{D_S}$ real diagonal such that $S=\gv{\mathcal{G},
     W_S, \hat{D_S}}$ is a GV representation of $S$. Given an
   algorithm for the computation of $W_S$ it is possible to represent
   $M+S$ as $\gv{\mathcal G, W_S+W, D + D_S}$.
  
  We need to investigate how to actually compute the matrix
  $W_S$ assuming we are given  a GV representation
  $S = \gv{\mathcal F, z, D_S}$ of $S$.
   Recall that the $i$-th
  column of $W_S$ can be extracted from the components of the 
  vector $\mbox{\ensuremath{\mathcal G^*[i+1:]}} M e_i$, as explained in Lemma~\ref{lem:qscheck}. We can compute the whole matrix $W_S$
  at cost $O(nk)$ by following this procedure: 
  
  \begin{itemize}
    \item Compute the last column of $W_S$ by using Lemma \ref{lem:qscheck}.
      This is almost cost-free since no rotations are involved, and the only
      significant element of $W_Se_{n-1}$ is equal to $z_{n-1}$. 
    \item Compute $W_Se_{i}$ starting from $W_Se_{i+1}$; 
      this vector can be computed by using some
      elements in $\mathcal G^*[i+1:] M e_i$. In fact, since we are in
      the $1$-quasiseparable case then $\underline z_i=z_i e_{i+1}$. So we have
      \begin{align*}
        \mathcal G^*[i+1:] M e_{i} &= 
          \mathcal G^*[i+1:] \mathcal F[i+1:] \underline z_{i} = 
          \mathcal G^*[i+1] \mathcal G^*[i+2:] \mathcal F[i+2:] \mathcal F[i+1] \underline z_{i}\\
          &= z_i \mathcal G^*[i+1] \mathcal G^*[i+2:] \mathcal F[i+2:] \left(
           \alpha e_{i} + \beta e_{i+1} \right) = \\
          &= z_i \mathcal G^*[i+1] \left( \frac{\beta}{z_{i+1}} \mathcal G^*[i+2:]  M e_{i+1} + \alpha e_i \right). 
      \end{align*}
      Since $\mathcal G^*[i+2:]  Me_{i+1}$ has already been computed,
      we obtain $W_Se_{i}$ from $W_Se_{i+1}$ at the cost of $O(k)$
      rotations. Some care needs to be taken because in the above formula
      it may happen that $z_{i+1}$ is zero. This issue can be solved
      by replacing $M e_i$ with $\mathcal F[i+1:] e_i$ and by multiplying
      the computed $W_Se_i$ by $z_i$ after the computation. This way, we can 
      also avoid the possible cancellation in cases where the $z_i$ are
      unbalanced.
  \end{itemize}
  
  The above procedure provides a fast method for computing  
  a $k$-quasiseparable representation of $S$.
  
  We investigate now the problem of computing the residual matrix
  given by Theorem~\ref{th:0} and Corollary \ref{cor:2} using the GV
  representation of $M$ and $S$.  We rephrase the proof of Theorem~\ref{th:0} 
in terms of GV representations.

  Analyzing the proof of Theorem~\ref{th:0} and its corollaries, we
  can observe that the algorithm can be easily constructed if we are
  able to compute the residual $R_i = t(F_iUVF_i^*) - F_it(UV^*)
  F_i^*$ for a Givens rotation $F_i$ acting on the rows $(i,i+1)$. 
 In the following, we suppose that $i < n - k - 1$ so
  that we do not need to care about ``border conditions''. However, all the
  concepts reported are easily extendable to those cases by
  just adding some care in the process.
  
 Assume we are given $D,U,V,W$ and $\mathcal G$ such that
  $M = D + t(UV^*) = \gv{\mathcal G, W, D}$. Observe that we can
  compute an updated $\hat{\mathcal G}$ such that $\hat{\mathcal G}$ spans 
  $F_i U$. In fact, we know that $\mathcal G^* F_i^* F_i U$ is of the form $\left[\begin{smallmatrix}
    \times \\ 0
  \end{smallmatrix}\right]$ where $\times$ is an appropriate
  $k \times k$ block. The rotation $F_i^*$ can be passed through the
  rotations inside $\mathcal G^*$ (by properly updating them using the
  turnover operation)
  obtaining $\hat{F}_{i+k}^* \hat{\mathcal G}^* = \mathcal G^*
  F_i^*$. Then, by $\hat{F}_{i+k}^* \hat{\mathcal
    G}^* F_i U = \left[\begin{smallmatrix} \times \\ 0
    \end{smallmatrix}\right]$, we can conclude that also $\hat{\mathcal G}^* F_iU = \hat F_{i+k} \left[\begin{smallmatrix}
          \times \\ 0
        \end{smallmatrix}\right] = \left[\begin{smallmatrix}
          \times \\ 0
        \end{smallmatrix}\right]$ since $\hat{F}_{i+k}$ is operating
        on the null rows.
        
  Moreover, we can check that $\gv{\hat{\mathcal G}, W, D}$ correctly
  represents the lower part of $t(F_i(D + UV^*)F_i^*)$ on every column but
  the one with indices $i, i+1$. In fact, the diagonal part of $M$ is 
  left unchanged on the indices different from $i,i+1$. For the rest of the
  matrix we can distinguish two cases and we do not need to care about
  $D$:
  \begin{itemize}
    \item If $j > i+1$, both the left multiplication
      by $F_i$ and the right multiplication by $F_i^*$ leave unchanged
      the relevant part of $U$ and $V$ needed for the computation
      of the portion of the $j$-th column contained in the lower part
      of the matrix. Moreover, since in this case
      $\hat{\mathcal G}[j+1:] = \mathcal G[j+1:]$ we conclude that the proposed representation 
      for these columns is valid. 
    \item Also when $j < i$ the right multiplication by 
      $F_i^*$ does not change the $j$-th column at all. However, the 
      left multiplication by $F_i$ does  change the $j$-th column and we can 
      verify that 
      $\tril(t(F_iUV^*F_i^*),-1) e_j = \tril (F_i U V,-1) e_j$. Recall
      that, by definition of GV representation, we have
      \[
        \tril(t(UV^*),-1) e_j = \tril(UV^*,-1) e_j = \tril(M,-1) e_j = 
        \mathcal G[j+i:] \underline{w_i}. 
      \]
      Since $j < i$ we have  $F_i \tril(UV^*,-1) e_j = 
      \tril(F_iUV^*,-1) e_j$ so that we can write
      \[
        \tril(F_iUV^*,-1) e_j = F_i \mathcal G[j+1:] \underline w_i = 
        \hat{\mathcal G}[j+1:] \hat{F}_{i+k} \underline w_j = 
        \hat{\mathcal G}[j+1:] \underline w_j
      \]
      where the last two equalities follow from the definition of 
      $\hat{\mathcal G}$ and from the fact that the components of $\underline w_j$
      with index bigger than $j+k$ are zero. Thus we have
      that $\gv{\hat{\mathcal G}, W, D}$ provides a good representation
      of the lower part of the $j$-th column of $t(F_iUV^*F_i^*)$, as 
      requested. 
  \end{itemize}
  
  A pictorial representation of these two facts can help to get a better
  understanding of what is going on (here we are fixing $k = 2$). The rotation
  $F_i$ on the left is highlighted using the bold font. Equation~\eqref{eq:gvupdate1} represents the first case, where the rotation
  $F_i$ does not intersect the indices of the rotations in $\mathcal G[j+1:]$, and Equation~\eqref{eq:gvupdate2} the latter case, where an
  update of the rotations is necessary. 
  
  \begin{equation} \label{eq:gvupdate1}
    \begin{array}{lllll}
      \\ &&&& \redgr \\ \\ 
      &&& \gr \\
      && \gr && \gr \\
      & \gr && \gr \\
      \gr && \gr \\
    \end{array} \left[ \begin{array}{l}
      0 \\ \vdots \\ 0 \\ \star \\ \star \\ 0 \\ \vdots \\ 0 \\
    \end{array}\right] = 
        \begin{array}{lllll}
          \\ \\ \\ 
          &&& \gr \\
          && \gr && \gr \\
          & \gr && \gr \\
          \gr && \gr \\
        \end{array} \left[ \begin{array}{l}
          0 \\ \vdots \\ 0 \\ \star \\ \star \\ 0 \\ \vdots \\ 0 \\
        \end{array}\right]
  \end{equation}
  
  \begin{equation} \label{eq:gvupdate2}
      \begin{array}{lllll}
        \\ \\ \\ 
        &&& \gr \\
        \redgr && \gr && \gr \\
        & \gr && \gr \\
        \gr && \gr \\
      \end{array} \left[ \begin{array}{l}
        0 \\ \vdots \\ 0 \\ \star \\ \star \\ 0 \\ \vdots \\ 0 \\
      \end{array}\right] = 
          \begin{array}{lllll}
            \\ \\ \\ 
            &&& \gr \\
            && \hat\gr && \gr \\
            & \hat\gr && \hat\gr \\
            \gr && \hat\gr && \hat{\redgr} \\
          \end{array} \left[ \begin{array}{l}
            0 \\ \vdots \\ 0 \\ \star \\ \star \\ 0 \\ \vdots \\ 0 \\
          \end{array}\right] = 
                    \begin{array}{lllll}
                      \\ \\ \\ 
                      &&& \gr \\
                      && \hat\gr && \gr \\
                      & \hat\gr && \hat\gr \\
                      \gr && \hat\gr  \\
                    \end{array} \left[ \begin{array}{l}
                      0 \\ \vdots \\ 0 \\ \star \\ \star \\ 0 \\ \vdots \\ 0 \\
                    \end{array}\right]
  \end{equation}

  This means that we need to track only what happens on columns
  $(i,i+1)$. We show how to update $D$ and $W$ in the $i$ an $i+1$
  components in order to account for what happens in these indices. 
  Note that these columns of $M$ can be described in
  the following way (we report the case $k = 3$ for simplicity):     
  
  \[
    M \begin{bmatrix} 
      \\
      e_i & e_{i+1} \\ 
      \\ \end{bmatrix} = \mathcal G[i+2:] \left( 
      \ascendinggr
       \begin{bmatrix}
        d_i & 0  \\
        w_{1,i} & 0 \\
        w_{2,i} & 0 \\
        w_{3,i} & 0 \\
        0 & 0
      \end{bmatrix} +  \begin{bmatrix}
      0 & \times  \\
      0 & d_{i+1} \\
      0 & w_{1,i+1} \\
      0 & w_{2,i+1} \\
      0 & w_{3,i+1} \\
    \end{bmatrix}  \right).
  \]
  
  Left and right multiplying by $F_i$ and $F_i^*$
  (reported with the bold font), respectively, leads to the following
  structure: 
  
  \[
    F_i M F_i^* \begin{bmatrix} 
      \\
      e_i & e_{i+1} \\ 
      \\ \end{bmatrix} = \mathcal G[i+2:] \left( 
\raisebox{6pt}{\ensuremath{\begin{array}{llll}
            \\
            \redgr \\
            &\gr \\
            &&\gr \\
            &&&\gr \\
          \end{array}}}
       \begin{bmatrix}
        d_i & 0  \\
        w_{1,i} & 0 \\
        w_{2,i} & 0 \\
        w_{3,i} & 0 \\
        0 & 0
      \end{bmatrix} +  \begin{array}{l} 
        \redgr \\
        \\
        \\
        \\
      \end{array}
      \begin{bmatrix}
      0 & \times  \\
      0 & d_{i+1} \\
      0 & w_{1,i+1} \\
      0 & w_{2,i+1} \\
      0 & w_{3,i+1} \\
    \end{bmatrix}  \right) \begin{array}{l}
     \redgr \\ \\ \\ \\ \end{array}.
  \]  
  
  We can explicitly compute the value inside the brackets and
  then observe that, since \mbox{$\hat{\mathcal G}[i+2:]$} $= 
  \mathcal G[i+2:]$, we have a representation of the columns
  of $F_iMF_i^*$. Now we want to find a Hermitian matrix
  $R$ of the form $R = \alpha  e_{i+1}e_i^t + \conj \alpha e_ie_{i+1}^t$,  $\hat{w}_{j,i}$, $\hat{w}_{j,i+1}$ for $j = 1, \dots, k$ and $\hat{d_i}$, 
  $\hat{d}_{i+1}$ such that,
  writing with $\hat{\gr}$ the rotations taken from $\hat{\mathcal G}$, we have

{\small  \begin{equation} \label{eq:conjugation}
     \underbrace{\left( 
\raisebox{6pt}{\ensuremath{\begin{array}{llll}
            \\
            \redgr \\
            &\gr \\
            &&\gr \\
            &&&\gr \\
          \end{array}}}
           \begin{bmatrix}
            d_i & 0  \\
            w_{1,i} & 0 \\
            w_{2,i} & 0 \\
            w_{3,i} & 0 \\
            0 & 0
          \end{bmatrix} +  \begin{array}{l} 
                  \redgr \\
                  \\
                  \\
                  \\
                \end{array} \begin{bmatrix}
          0 & \times  \\
          0 & d_{i+1} \\
          0 & w_{1,i+1} \\
          0 & w_{2,i+1} \\
          0 & w_{3,i+1} \\
        \end{bmatrix}  \right)}_{C} \begin{array}{l}
         \redgr \\ \\ \\ \\ \end{array} + R = 
      \ascendinggrhats
       \begin{bmatrix}
        \hat{d}_i & 0  \\
        \hat{w}_{1,i} & 0 \\
        \hat{w}_{2,i} & 0 \\
        \hat{w}_{3,i} & 0 \\
        0 & 0
      \end{bmatrix} +  \begin{bmatrix}
      0 & \times  \\
      0 & \hat{d}_{i+1} \\
      0 & \hat{w}_{1,i+1} \\
      0 & \hat{w}_{2,i+1} \\
      0 & \hat{w}_{3,i+1} \\
    \end{bmatrix}. 
  \end{equation}
  }

  Let $C$ be the left matrix in \eqref{eq:conjugation}. 
  Then  the elements $\hat{w}_{j,i+1}$ must coincide
  with the vector $C[3:,2]$, the diagonal elements
  $\hat{d}_i$ and $\hat{d}_{i+1}$ are determined by the 
  diagonal of the top $2 \times 2$ block of $C$. It remains to determine
  the elements $\hat{w}_{j,i}$ and the value $\alpha$. To find them 
  we can  multiply on the left by the inverses of the rotations
  in $\hat{\mathcal G}[i]$. We get the equation
  \[
    \descendinggrhats \left( C e_1 + \begin{bmatrix} 0 \\ \alpha \\ 0 \\ 0 \\ 0 \\ \end{bmatrix} \right) = \begin{bmatrix}
            \hat{d}_i \\
            \hat{w}_{1,i} \\
            \hat{w}_{2,i} \\
            \hat{w}_{3,i} \\
            0 \\ 
          \end{bmatrix}. 
  \]  
  We can choose $\alpha$ such that we get a $0$ in the last
  component (which can always be done if the rotations are not trivial)
  and then set the values $\hat{w}_{j,i}$ by back substitution. 
  
  The algorithm presented above allows us to find the residual matrix
  $R$ and a new $k$-quasiseparable matrix $\hat{M}$ with the desired
  rotations in its representations such that $\hat{M} + R = F_i M
  F_i^*$.  This is what we desire in order to implement the
  algorithm. The reader may wonder if $\hat{M}$ is exactly the matrix
  $t(F_i U V F_i^*)$. This is actually true by assuming some
  hypothesis about the irreducibility of the matrices involved and
  that $i$ is not bigger than $n-k-1$, essentially due to the fact
  that $F_iMF_i^*$ is forced to be equal to $t(F_iUV^*F^*)$ in all the
  elements but the $2 \times 2$ submatrix of indices $(i,i+1)$. We
  omit the analysis of this property since, in the end, it is not very
  relevant for the algorithm. The important result is just that we
  have found a matrix with the correct Givens rotations such that the
  sum of the lower parts is contained in the same span. This last
  claim clearly holds in this case.

  \section{Reduction algorithm}
    \label{sec:reduction-algorithm}
    
    In this section we explain how the reduction algorithm can be
    constructed by using the tools presented in the previous sections.
    
    Recall that the matrix $A = D + UV^*$ can be represented in the
    more general form $A = t(UW^*) + S + UV^*$, where $S$ is a
    Hermitian $1$-quasiseparable matrix and $U,V, W\in \mathbb{C}^{n
      \times k}$, just by setting $W=0$ and $S=D$. Recall also that,
    by Theorem \ref{th:0}, this form is maintained by the trailing
    principal submatrices $\hat A_j\in\mathbb
    C^{(n-j+1)\times(n-j+1)}$ of the matrices $A_j$, generated at each
    step $j$ of the algorithm, that is, $\hat
    A_j=t(U_jW_j^*)+S_j+U_jV_j^*$.  The matrices $U_j$ and $V_j$ are
    easily obtained by multiplying $U_{j-1}$ and $V_{j-1}$ by a
    sequence of Givens rotations and by removing the first row. The
    matrices $S_j$ and $W_j$ will be used to store the ``residues''.
    
    A high level overview of the algorithm is reported in the
    pseudo-code of Algorithm~\ref{alg:highlevelreduction}. 

    \begin{algorithm} 
      \caption{High level reduction process}
      \label{alg:highlevelreduction}
    \begin{algorithmic}[1]
      \State $A_1 \gets D + UV^*$
      \State $M \gets 0$
      \State $S \gets 0$
      
      \State $s \gets {\tt zeros}(1,n-1)$
      \State $d \gets {\tt zeros}(1,n)$
      
      \For{$i = 1, \dots, n - 2 - k$}
        \State $\mathcal G \gets {\tt cleanColumn}(A_{i}[:,1])$
        \State $d[i] \gets (\mathcal G A_{i}[:,1])[1]$
        \State $s[i] \gets (\mathcal G A_{i}[:,1])[2]$
        \State $U \gets (\mathcal G \cdot U) [2:n,:]$
        \State $V \gets (\mathcal G \cdot V) [2:n,:]$
        \State $(R_M,M) \gets {\tt conjugateAndTruncate}(M,\mathcal G)$
        \State $(R_S,S) \gets {\tt conjugateAndTruncate}(S,\mathcal G)$
        \State $M \gets M + S$
        \State $S \gets R_M + R_S$
      \EndFor
      
      \State $\left( d[n-1-k:n], s[n-1-k:n-1],U,V \right) \gets {\tt reduceTrailingBlock}(A_{n-1-k})$
      
    \end{algorithmic}
    \end{algorithm}
    
    The functions in the code of Algorithm~\ref{alg:highlevelreduction}
    perform the following operations: 
    
    \begin{description}
      \item[{ \tt cleanColumn}$(v)$] is a function that takes as input 
       a column vector and returns a sequence of Givens rotations $\mathcal G$ such that $\mathcal G v = v_1 e_1 + \alpha e_2$ for some $\alpha$. 
      \item[$(R_M,M) \gets ${\tt conjugateAndTruncate}$(M,\mathcal G)$] takes as 
      input a quasiseparable
       Hermitian matrix $M \in \qsh{k}$ and a sequence of Givens rotations
       $\mathcal G$. Then it computes a quasiseparable representation
       for $\mathcal G M \mathcal G^* - R_M$ where $R_M$ is a matrix
       in $\qsh{1}$. It returns an updated representation of $M$ and
        the residual matrix $R_M$. 
      \item[$S \gets R_M + R_S$] computes the sum of the matrices
       $R_M, R_S \in \qsh{1}$. This is done by assuming that both 
       have the same sequence of Givens rotations in their representation. 
      \item[{\tt reduceTrailingBlock}$(A)$] reduces the last $k \times k$
       block of the matrix using a standard Hessenberg reduction process. 
       This is done because, in the last steps, the trailing block 
       does not have any particular structure anymore. 
    \end{description}
    
    Some numerical issues might be encountered in the above version of
    the algorithm. For instance, some cancellation may happen in the
    sum $R_M + R_S$, which eventually may affect the Givens rotations
    of the representation of $M$.
    
    A technique based on re-orthogonalization can be used to restore
    better approximations. Recall that the rotations inside the GV
    representation of $M$ are such that $\mathcal G^* U = \left[ \begin{smallmatrix}
      Z \\ 0 
    \end{smallmatrix} \right]$. Such rotations are not unique but (at
    least with some hypothesis on irreducibility)
    are essentially unique, that is, they can be determined 
    up to a multiplicative constant of modulus $1$. Based on this information
    we can compute rotations in order to obtain $\mathcal G^* U$ in
    the desired form and correct the moduli of the sine and cosine
    inside $\mathcal G$ without altering the signs. 
    
    This has shown to be quite effective in practice, leading to better 
    numerical results. The cost of a reorthogonalization is the cost
    of a QR factorization of $U$, thus asymptotically $O(nk^2)$. By performing
    it every $k$ steps we have a total cost of the modified reduction
    algorithm that is still $O(n^2k)$, since we need $O(n)$ steps to
    complete the reduction and $O(nk) \cdot \frac 1 k O(n) = O(n^2k)$ ops. 
    
  \section{Numerical experiments}
    \label{sec:numerical-experiments}
    
    The algorithm presented in Section~\ref{sec:reduction-algorithm} 
    has been implemented in the Julia language. It has been run on a Laptop
    with an Intel(R) Core(TM) i3-2367M CPU running at 1.40GHz and 4
    GB of RAM. 
    
    In order to analyze the complexity and the accuracy
    of the results we have performed the following tests: 
    \begin{itemize}
      \item We have run the algorithm on matrices of different sizes but 
        with constant quasiseparability rank $k = 10$. The purpose is to verify that the CPU
        time is quadratic in the dimension of the problem. 
      \item We have run the algorithm at a fixed dimension $n = 200$
        with values of $k$ between $5$ and $160$. Here the goal is to verify that
        the CPU time grows linearly with the rank. 
      \item We have run the algorithm on some test problems in order
        to measure the errors on the eigenvalues computed starting
        from the final Hessenberg form. The purpose of this set of
        tests is to check the numerical stability of the algorithm.
    \end{itemize}
    
    Every experiment has been run $10$ times and the mean value
    of the timings has been taken.
    In Figure~\ref{fig:exp_n}
    we have reported, in log scale, the timings for some 
    experiments with $n = 100i$ for $i = 1, \dots, 10$. 
    In Figure~\ref{fig:exp_k} we have
    reported the CPU time in the case of
    matrices of fixed size $n = 400$ 
    with various quasiseparable ranks ranging from $5$ to 
    $160$.

    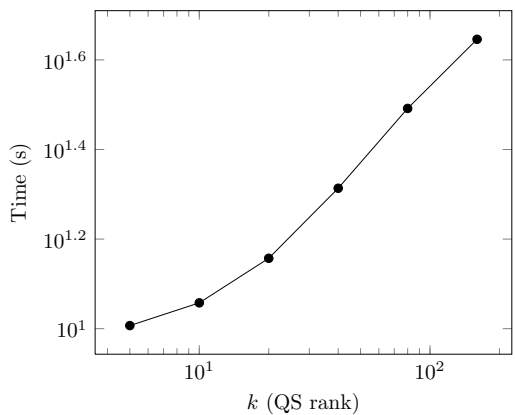
\begin{figure}
     \begin{minipage}{0.6\linewidth}
      \begin{tikzpicture}[scale=.8]
      \begin{loglogaxis}[xlabel=$n$ (Size),
          ylabel=Time (s),
          xmin=8e1,
          xmax=1.2e3]
      	\addplot[mark=*] table {exp_n.dat};
      	% \addplot[domain=1e2:1e3,red] {7.1e-5*x^2};
        \addplot[red,mark=none] table {interp_exp_k.dat};
      \end{loglogaxis}
      \end{tikzpicture}
      \end{minipage}
      \begin{minipage}{0.25\linewidth}
        \begin{tabular}{ll}
          Size & Time (s) \\ \hline \hline
          100 & 0.65 \\
          200 & 2.68 \\
          300 & 6.06 \\
          400 & 11.05 \\
          500 & 17.5 \\
          600 & 25.8 \\
          700 & 35.7 \\
          800 & 49.4 \\
          900 & 67.17 \\
          1000 & 77.56 \\
        \end{tabular}
      \end{minipage}
      \caption{CPU time, in seconds, for the Hessenberg reduction of a diagonal
      plus rank $10$ matrix of size $n$. Here the line is the plot
      of $\gamma n^2$ for an appropriate $\gamma$. It is evident the quadratic behavior of the time.}
      \label{fig:exp_n}
    \end{figure}

    \begin{figure} 
    \begin{minipage}{0.6\linewidth}
      \begin{tikzpicture}[scale=.8]
      \begin{loglogaxis}[xlabel=$k$ (QS rank),ylabel=Time (s)]
      	\addplot[mark=*] table {exp_k.dat};
      \end{loglogaxis}
      \end{tikzpicture}
      \end{minipage}
      \begin{minipage}{0.35\linewidth}
        \begin{tabular}{ll}
          QS Rank & Time (s) \\\hline \hline
          5 & 10.16 \\
          10& 11.42 \\
          20& 14.36 \\
          40& 20.58 \\
          80& 31.02 \\
          160& 44.25 \\
        \end{tabular}      
      \end{minipage}
      \caption{CPU time, in seconds, for the Hessenberg reduction of a $400 \times 400$ diagonal plus rank $k$ matrix.}\label{fig:exp_k}
    \end{figure}
    
    Looking at the results in Figure~\ref{fig:exp_k} we see that the
    complexity in the rank is almost sublinear at the start. This is
    due to the inefficiency of operations on small matrices and the
    overhead of these operations in our Julia implementation. The
    linear trend starts to appear for larger ranks.

            \begin{figure} 
            \begin{minipage}{0.47\linewidth}
              \begin{tikzpicture}[scale=.7]
              \begin{loglogaxis}[xlabel=$n$ (Size),
                    ylabel=Absolute error,
                    ymax=1e-2
                ]
              	\addplot[mark=*] table {err.dat};
              	\addplot[mark=square] table {err_max.dat};
              	\addplot[mark=x] table {err_min.dat};
              	\legend{Mean,Maximum,Minimum};
              \end{loglogaxis}
              \end{tikzpicture}
              \end{minipage}
              \begin{minipage}{0.52\linewidth}\small
    			\begin{tabular}{cccc} 
    			  Size & Mean & Minimum & Maximum \\ \hline \hline
    			     40.0  & 5.52e-14  & 1.11e-15  & 3.97e-13 \\
    			     80.0  & 2.59e-13  & 0.0          & 2.43e-12 \\
    			    160.0  & 5.23e-13  & 5.32e-15  & 3.74e-12 \\
    			    320.0  & 5.06e-12  & 1.49e-14  & 2.41e-10 \\
    			    640.0  & 1.80e-10  & 1.42e-13  & 2.43e-9  \\
    			   1280.0  & 8.43e-9   & 6.43e-15  & 3.32-7   \\
    		  \end{tabular}
              \end{minipage}
              \caption{Absolute errors on eigenvalues computation for random matrices of quasiseparable rank $30$ and variable sizes. }\label{fig:exp_err}
            \end{figure}
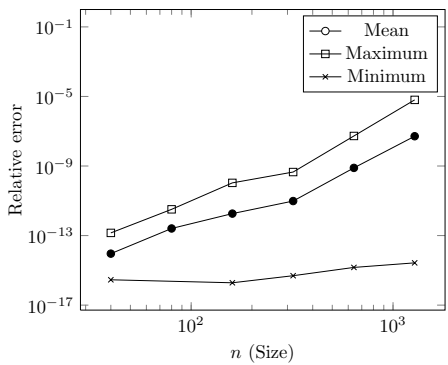
            
                    \begin{figure} 
                    \begin{minipage}{0.47\linewidth} 
                      \begin{tikzpicture}[scale=.7]
                      \begin{loglogaxis}[xlabel=$n$ (Size),
                            ylabel=Relative error,
                            ymax=1
                        ]
                      	\addplot[mark=*] table {rel_err.dat};
                      	\addplot[mark=square] table {rel_err_max.dat};
                      	\addplot[mark=x] table {rel_err_min.dat};
                      	\legend{Mean,Maximum,Minimum};
                      \end{loglogaxis}
                      \end{tikzpicture}
                      \end{minipage}
                      \begin{minipage}{0.52\linewidth} \small
            			\begin{tabular}{cccc}
            			  Size & Mean & Minimum & Maximum \\ \hline \hline
            		   40.0 & 9.13e-15 & 2.86e-16 & 1.42e-13 \\
            		   80.0 & 2.55e-13 & 0.0         & 3.22e-12 \\
            		  160.0 & 1.83e-12 & 1.92e-16 & 1.07e-10 \\
            		  320.0 & 9.66e-12 & 4.91e-16 & 4.49e-10 \\
            		  640.0 & 7.79e-10 & 1.47e-15 & 5.34e-8  \\
            		 1280.0 & 5.15e-8  & 2.69e-15 & 6.35e-6  \\  
            		  \end{tabular}
                      \end{minipage}
                      \caption{Relative errors on eigenvalues computation for random matrices of quasiseparable rank $30$ and variable sizes. }\label{fig:exp_rel_err}
                    \end{figure}
            
         As a last experiment in Figure~\ref{fig:exp_err} and Figure~\ref{fig:exp_rel_err} we have reported
         the absolute and relative errors, respectively, on eigenvalue computations for various sizes
         and fixed quasiseparable rank. The errors were obtained as differences between the eigenvalues computed from the starting full matrix using
         the QR algorithm and the QR algorithm applied to the Hessenberg matrix provided
         by our algorithm. 
         
         The matrices in these examples have been obtained by using the
         {\tt randn} function that constructs matrices whose elements are drawn
         from a $N(0,1)$ Gaussian distribution. This function has been used
         to construct $D$, $U$ and $V$ diagonal and $n \times k$, respectively, such that $A = D + UV^*$. 

\section{An application}\label{sec:appl}
Let $P(x)=\sum_{i=0}^d P_ix^i$ be a matrix polynomial where $P_i$ are
$k\times k$ matrices. In \cite{secular-linearization} a companion
linearization $A$ for $P(x)$ has been introduced where $A$ is an
$n\times n$ matrix, $n=dk$, of the form \eqref{eq:d+lr} with $n=dk$.  The
computation of the eigenvalues of $P(x)$, that is, the solutions of
the equation $\det P(x)=0$, is therefore reduced to solving the linear
eigenvalue problem for the matrix $A$. The availability of the
Hessenberg form $H$ of $A$ with the quasiseparability structure,
enables one to apply the QR iteration to $H$ at a low cost by exploiting
the both the Hessenberg and the quasiseparable structures.

A different approach to solve the equation $\det P(x)=0$, followed in
\cite{noferini}, consists in applying the Ehrlich-Aberth iteration to the
polynomial $\det P(x)$, or alternatively, to represent the polynomial
$P(x)$ in secular form \cite{br:jcam}. In this case, one has to compute
$\det P(x)$ at different values $x_1,\ldots, x_n$. Indeed, the
evaluation of $\det P(x)$ at a single value $x=\xi$ can be performed
by applying the Horner rule in order to compute the matrix $P(\xi)$ and
then by applying Gaussian elimination for computing the determinant
$\det P(\xi)$. The overall cost is $O(dk^2+k^3)$ so that the
computation at $n=dk$ different points has the cost $O(d^2k^3+dk^4)$.

The availability of the structured Hessenberg form $H$ allows us to reduce
this cost. We will show that computing $\det(xI-A)=\det(xI-H)$ can be performed
in $O(nk)=O(dk^2)$ operations. Asymptotically, the value of this cost is the minimum
that we can obtain. In fact, the matrix $A$ is defined by $2nk+n$
entries, so that any algorithm which takes in input these $2nk+n$
values must perform at least $nk+n/2$ operations. In fact each pair of
input data must be involved in at least an operation.  In the case where we
have to compute $\det (xI-A)$ at $dk$ different values we obtain the
cost $O(d^2k^3)$ which improves the cost required by the Horner rule applied to the matrix polynomial.

The algorithm for computing $\det (xI-H)$ at a low cost relies on the
Hyman method \cite{wilkinson}.  Assume that $H$ is in
$k$-quasiseparable Hessenberg form.  Consider the system
$(xI-H)v=\alpha e_1$, set $v_n=1$ so that the equations from the
second to the last one form a triangular $k$-quasiseparable
system. The first equation provides the value of $\alpha$ after that
$v_1,\ldots,v_{n-1}$ have been computed.  Since a triangular
quasiseparable system can be solved with $O(nk)$ ops, we are able to
compute $\alpha$ at the same cost. On the other hand, by the Cramer
rule we find that $1=v_n=\alpha(\prod_{i=2,n}h_{i,i-1})/\det(xI-H)$
which provides the sought value of $\det(xI-H)$. A similar approach
can be used for computing the Newton correction $p(x)/p'(x)$ for
$p(x)=\det(xI-H)$.

%The Hyman algorithm in its general form is backward stable. The same property can be proved

%  \bibliographystyle{elsarticle-num}
%  \bibliographystyle{plain}
%  \bibliographystyle{elsarticle-num-names}
  \bibliographystyle{siam}

  \bibliography{biblio}

\end{document}